\documentclass{article}
\usepackage{amsmath}
\usepackage{amssymb}
\usepackage{amsthm}
\usepackage{graphicx}
\pagestyle{headings}
\counterwithin{equation}{section}
\newtheorem{theorem}{Theorem}[section]
\newtheorem{lemma}[theorem]{Lemma}
\newtheorem{corollary}[theorem]{Corollary}
\newtheorem{definition}[theorem]{Definition}
\newtheorem{proposition}[theorem]{Proposition}

\usepackage{mathtools}
\usepackage{indentfirst}
\DeclareMathOperator{\Conf}{Conf}
\DeclareMathOperator{\supp}{supp}

\usepackage[top=1.25in, bottom=1.25in, left=1.25in, right=1.25in]{geometry}
\title{{\Huge Fisher Zeros and its Critical Behavior for the Ising Model on Sierpinski Gasket}}
\author{Shaosong Liu}
\date{}
\hyphenpenalty=5000
\tolerance=1000
\begin{document}
	\maketitle
	\begin{abstract}
		In this paper, we provide a proof of the explicit formula for the partition function of the Ising model on the Sierpinski gasket. Additionally, we demonstrate the dynamic behavior of the zero distribution of the partition function when a sequence of appropriate regions is selected. Through this analysis, it is shown that the density of zeros around temperature $0$ exponentially diminishes at a rate of $\frac{1}{3^n}$, and the pressure function approaches the logarithmic function.
	\end{abstract}
	
	\section{Introduction}
	
	\subsection{Background}
	
	The Ising model emerges as a powerful tool in characterizing magnetic materials, unraveling intricate physical phenomena such as spontaneous magnetization and phase transitions. In their seminal work, Lee and Yang [1] first proposed their fundamental approach to phase transitions, which involves studying the zeros of the partition function of a statistical system as a function of a complex parameter. For finite graphs, the partition function has positive coefficients so there are no real zeros. However, in the thermodynamic limit, the zeros have the potential to pinch the real axis. The pinching points represent phase transition locations on the parameter axis, and the distribution of zeros around them can be linked to the critical properties of the system.
	
	\subsection{Ising Model}
	
	Assume we have a graph $\mathcal{G}$ representing a magnetic matter in a certain scale. The Ising model is to associate a spin variable $\sigma(v)=\pm 1$ to every vertex $v$ of the graph. Let $\mathcal{V}$ be the set of all vertices in $\mathcal{G}$ and $\mathcal{E}$ be the set of all edges. In this context, for $v, w \in \mathcal{V}$, we use $<v, w> \in \mathcal{E}$ to denote the oriented edge. The map $\sigma:\mathcal{V}\to \left\lbrace \pm 1 \right\rbrace $ represents a spin configuration on $\mathcal{G}$.
	
	This paper focuses on the ferromagnetic model, where the total energy or Hamiltonian of $\sigma$ is given by $$H(\sigma):=-J\sum_{<v,w>\in \mathcal{E}}\sigma(v)\sigma(w)$$ where $J$ is a positive constant.
	
	The Gibbs weight of a spin configuration $\sigma$ is defined as $$W(\sigma):=e^{-H(\sigma)/T}$$ where $T>0$ denotes the temperature.
	
	To define the partition function, we introduce the configuration space $\Conf (\mathcal{G})$. It is the space of all spin configurations on $\mathcal{G}$. The partition function is the total Gibbs weight of the space: $$Z_{\mathcal{G}}:=\sum_{\sigma \in \Conf (\mathcal{G})}W(\sigma).$$
	
	\subsection{Sierpinski Gasket}
	
	Now we introduce our the Sierpinski gasket. It is a fractal graph built recursively. Initially we have a triangle $\mathcal{G}_0$ with three vertices and three edges. Then for $n$th stage $\mathcal{G}_n$, it is built by joining three copies of $\mathcal{G}_{n-1}$ at their external corners(figure 1). As $n \to \infty$, we obtain our hierachical gasket.
	
	\begin{figure}[htbp]
		\centering
		\includegraphics[width=0.6\textwidth]{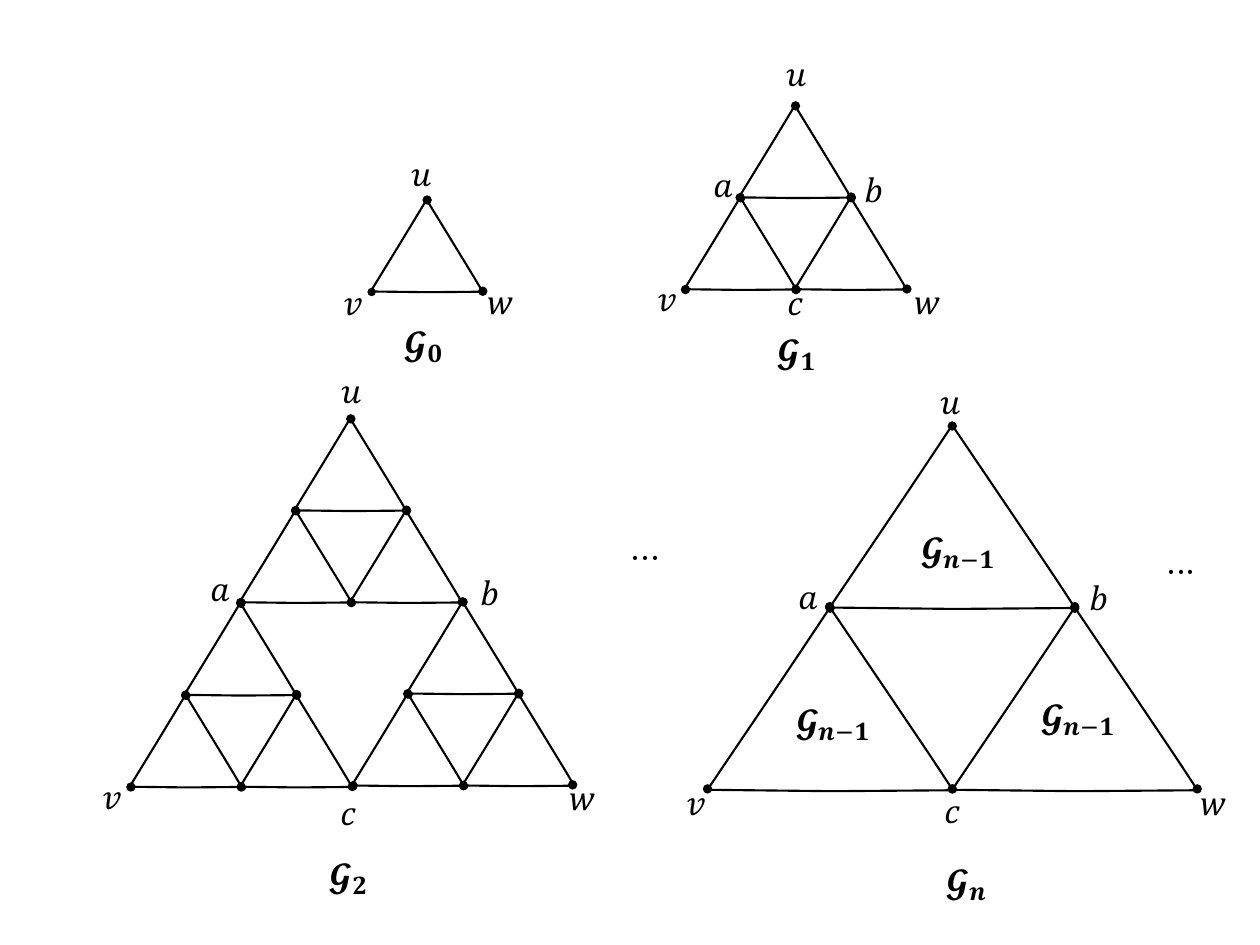}
		\caption{Gasket's construction}
		\label{Figure1}
	\end{figure}
	
	In the following, we write $W_n$ and $Z_n$ representing the Gibbs weight and partition function of $\mathcal{G}_n$ respectively in terms of the variable
	\begin{align*}
		y:=e^{\frac{J}{T}}.
	\end{align*} The pressure function is the limit of the logarithm of partition functions
	\begin{align}\label{def of pressure}
		p:=\lim_{n \to \infty}\frac{1}{4\cdot 3^n}\log|Z_n|.
	\end{align} When $p$ blows up near some tempature $T$, that is where the phase transition occurs.
	
	\subsection{Main results}
		
	Our goal is to illustrate the pressure function for Sierpinski gasket with Ising model. In [2], they show such partition functions satisfy a recurrence relation in terms of a certain multivalued map. Here we introduce a sequence of integer polynomials $\left\lbrace T_n \right\rbrace _{n=0}^{\infty}$ defined recursively in terms of a quadratic polynomial, from which we can factorize partition functions. In particular, we avoid multivalued maps entirely in our approach.
	
	Here is our main theorem.
	\begin{theorem}\label{pressure function and log}
		For $y \in \mathbb{R}^+:=[\, 0, \infty)$, the pressure function $p(y)$ is analytic and
		\begin{align*}
			\lim_{y \to \infty}\left| p(y)-\frac{3}{4}\log y\right| =0.
		\end{align*}
	\end{theorem}
	The pressure function also approaches to $\log $ if $y$ goes to $\infty$ along other directions on the complex plane $\mathbb{C}$.
	
	Our first step to achieve that is the expression of $Z_n$.
	
	\begin{theorem}\label{property of Mn}
		For every $n \in \mathbb{N}$ $\footnote {For this article $\mathbb{N}$ represents the set of all integers bigger than or equal to $0$, and $\mathbb{N}^+$ is $\mathbb{N} \setminus \left\lbrace 0\right\rbrace .$}$, the partition function $Z_n(y)$ satisfies that $y^{3^n}Z_n(y)$ is a polynomial in $\mathbb{Z}[y^4]$. Further, define
		\begin{align}\label{def of Mn}
			M_n(y^4):=\frac{y^{3^n}Z_n(y)}{2},
		\end{align}
		then $M_n(x)$ belongs to $\mathbb{Z}[x]$ and there is a recursive relation for $\left\lbrace M_n(x) \right\rbrace _{n=0}^{\infty}$:
		\begin{align*}
			M_n(x)=M_{n-1}(f(x))[(x+1)(x+3)]^{3^{n-1}}, 
		\end{align*}
		where $f(x):=\frac{x^2-x+4}{x+3}$.
	\end{theorem}
	
	For coefficients and the degree of $M_n$, we have:
	
	\begin{corollary}\label{coefficients of Mn}
		For every $n \in \mathbb{N}$, $M_n(x)$ is a monic polynomial of degree $3^n$ and has coefficients satisfying: \begin{align*}
			\frac{M_n(2x+1)}{2^{3^n}}\in \mathbb{Z}[x].
		\end{align*}The degree of $Z_n$ as a rational function of $y$ is $4\cdot 3^n$.
	\end{corollary}
	
	\noindent\textbf{Remark.} Here we give some initial expressions for $M_n$ and $Z_n$:
	\begin{equation}\label{examples of polynomials}
		\begin{aligned}
			M_0(x)&=x+3\\
			M_1(x)&=(x^2+2x+13)(x+1)\\
			M_2(x)&=(x^4+26x^2+72x+157)(x^2+7)(x+1)^3\\
			Z_0(y)&=\frac{2y^4+6}{y}\\
			Z_1(y)&=\frac{2(y^8+2y^4+13)(y^4+1)}{y^3}\\
			Z_2(y)&=\frac{2(y^{16}+26y^8+72y^4+157)(y^8+7)(y^4+1)^3}{y^9}.
		\end{aligned}
	\end{equation}
	
	It is not hard to guess that all coefficients(including the constant term) of $Z_n$ are real and positive so no phase transition for finite points. To observe the thermodynamic limit and phase transition near $y=\infty$, we consider \( Z_n \) as a map acting on \( \hat{\mathbb{C}} \), the Riemann sphere. After a certain coordinate change, we are able to deal with the normalized Laplacian of $Z_n$:
	\begin{align}\label{def of zetan}
		\zeta_n:=\frac{1}{4\cdot 3^n}\Delta \log (|Z_n|),
	\end{align} which plays a key role to compute the pressure function.
	
	\begin{theorem}\label{zeta converge}
		In the norm of total variation, $\left\lbrace \zeta_n \right\rbrace _{n=0}^{\infty}$ converges to $\zeta_{\infty}$ given by
		\begin{align*}
			\zeta_{\infty}:=-\frac{1}{4}\delta(0)-\frac{3}{4}\delta(\infty)+\sum_{j=0}^{\infty}\frac{1}{4\cdot 3^{j+1}}\left(  \sum_{\omega \in q^{-1}\circ f^{-j}(-1)}\delta(\omega)\right) 
		\end{align*}
		where $q(y):=y^4$ is the quartic map.
	\end{theorem}
	
	After Theorem \ref{zeta converge}, we can rewrite $p(y)$ like
	\begin{align*}
		p(y)=\int_{\mathbb{C}}\log |y-t|\mathrm{d} \zeta_{\infty}(t).
	\end{align*}
	
	By studying the inverse branch of \( f \), we can describe the zero distributions for \( M_n \) near \(\infty\) and prove that:
	
	\begin{theorem}\label{positive distance for zeros and real line}
		The closure of the set with all zeros of $\left\lbrace M_n(x) \right\rbrace _{n=0}^{\infty}$ has a positive distance to $\mathbb{R}^+$.
	\end{theorem}
	
	Then with the relationship between \( M_n \) and \( Z_n \) demonstrated in Theorem \ref{property of Mn}, we are able to prove Theorem \ref{pressure function and log}.
	
	\subsection{Outline of the paper}
	
	In Section 2, we demonstrate the recursive relation for \( Z_n \) step by step.
	
	In Section 3, we prove properties of $M_n$. During this process, we also introduce a new sequence of polynomials to illustrate zeros of $M_n$ more clearly.
	
	In Section 4, we prove Theorem \ref{zeta converge}. Additionally, we provide a corresponding result for the Laplacian of \( M_n \).
	
	In Section 5, we describe the asymptotic behavior of the inverse branch of \( f \). This allows us to control the distribution of preimages, which will be further illustrated in the subsequent section.
	
	In Section 6, we prove Theorem \ref{positive distance for zeros and real line}. To achieve this, we construct a partition for the set of zeros and use self-similarity under the translation map to find a positive gap.
	
	In Section 7, we compute the pressure function using the partition from Section 6 and prove Theorem \ref{pressure function and log}.
	
	\section{Recursive Relation}
	
	For the first partition function, we mark the three vertices as $u,v,w$ and write $Z_0$ as: $$\sum_{\sigma \in \Conf (\mathcal{G}_0)}W_0(\sigma).$$
	
	The term $W_0(\sigma)$ is actually a rank-3 tensor in $y^{\pm 1}$. Define two special configurations $\sigma_1$, $\sigma_2 $ like $\sigma_1(u)=\sigma_1(v)=\sigma_1(w)=1$, $\sigma_2(u)=\sigma_2(v)=1,\sigma_2(w)=-1$. They give two different values for $W_0$: $$W_0(\sigma_1)=y^3;\ W_0(\sigma_2)=y^{-1}.$$ By the reversing symmetry of the Ising model and reflection symmerty of the triangle, we know these two values cover all the eight possibilities of the Gibbs weight. The full partition function of $\mathcal{G}_0$ can be expressed like: $$Z_0(y)=2W_0(\sigma_1)+6W_0(\sigma_2)=2y^3+6y^{-1}.$$
	
	For the next stage, $\mathcal{G}_1$ is the union of three copies of $\mathcal{G}_0$, $W_1$ is also a rank-3 tensor in $W_0$(rank-9 in $y^{\pm 1}$). We mark the inner vertices of $\mathcal{G}_1$ by $a,b,c$ and $u,v,w$ represent exterior vertices. Fixing values of $\sigma(u),\sigma(v)$ and $\sigma(w)$ for $\sigma \in \Conf (\mathcal{G}_1)$, the Gibbs weight will be: $$W_1(\sigma)=W_0(\sigma_{uab})W_0(\sigma_{avc})W_0(\sigma_{bcw})$$ where $\sigma_{uab}=\sigma \bigg|_{uab} \in \Conf (\mathcal{G}_0)$ and the same for other two.
	
	Again by symmetry and values of $W_0$ we find that there are only two possible values for the sum of all $W_1(\sigma)$ of eight possible $\sigma$: $$\sum_{\mbox{\tiny$\begin{array}{c}
				\sigma \in \Conf (\mathcal{G}_1)\\
				\sigma(u)=\sigma(v)=\sigma(w)=1
			\end{array}$} }W_1(\sigma)= W_0(\sigma_1)^3+3W_0(\sigma_1)W_0(\sigma_2)^2+4W_0(\sigma_2) ^3$$
	and
	$$\sum_{\mbox{\tiny$\begin{array}{c}
				\sigma \in \Conf (\mathcal{G}_1)\\
				\sigma(u)=\sigma(v)=1,\sigma(w)=-1
			\end{array}$}}W_1(\sigma)=W_0(\sigma_1)^2W_0(\sigma_2)+4W_0(\sigma_1)W_0(\sigma_2)^2+3W_0(\sigma_2)^3.$$
	
	Recall for $\mathcal{G}_0$, $W_0(\sigma_1)$ and $W_0(\sigma_2)$ are the two possible values. Define $U_0=W_0(\sigma_1)$, $V_0=W_0(\sigma_2)$ and the above two sums as $U_1$, $V_1$ respectively, we can rewrite the two equations and our partition functions like:
	\begin{align*}
		U_1&=U_0^3+3U_0\cdot V_0^2+4V_0^3\\
		V_1&=U_0^2\cdot V_0+4U_0\cdot V_0^2+3V_0^3\\
		Z_0&=2U_0+6V_0\\
		Z_1&=2U_1+6V_1.
	\end{align*}
	We put their expressions in terms of $y$ in here for later use:
	\begin{equation}\label{initial values for UV}
		\begin{aligned}
			U_0(y)&=y^3\\
			V_0(y)&=y^{-1}\\
			U_1(y)&=y^9+3y+4y^{-3}\\
			V_1(y)&=y^5+4y+3y^{-3}.
		\end{aligned}
	\end{equation}
	We can find the expression of $Z_0$ and $Z_1$ in \eqref{examples of polynomials}.
	
	As Figure \ref{Figure1} shows, for general $n$, $W_n$ is again a rank-3 tensor in $W_{n-1}$. Inductively, we can prove its sum of Gibbs weights with fixed signs on three exterior vertices has only two possible values and can be computed by the same method. Therefore we define $U_n$ to be the one with same sign on exterior vertices and $V_n$ to be the other. In summary, for every $n \in \mathbb{N}^+$, there is a recursive relation between $U_n,V_n$ and $U_{n-1},V_{n-1}$:
	\begin{align}\label{U}
		U_n= U_{n-1}^3+3U_{n-1}\cdot V_{n-1}^2+4V_{n-1}^3
	\end{align}
	and
	\begin{align}\label{V}
		V_n=U_{n-1}^2\cdot V_{n-1}+4U_{n-1}\cdot V_{n-1}^2+3V_{n-1}^3
	\end{align}
	Our partition function of $\mathcal{G}_n$ is:
	\begin{align}\label{Zn}
		Z_n=2U_n+6V_n
	\end{align}
	
	\section{Factorization of the Partition function}
	This section discusses how to express $Z_n$ and its factorization under a certain coordinate. First we prove Theorem \ref{property of Mn}. Then we introduce
	\begin{definition}\label{def Tn}
		Define the elements of $\left\lbrace T_n(z) \right\rbrace _{n=0}^{\infty}$ in $\mathbb{Z}[z]$ recursively as follows:
		\begin{enumerate}
			\item[1.]
			$T_0(z):=z+1;$
			\item[2.]
			$T_n(z):=(z+2)^{3^{n-1}}\cdot T_{n-1}(g(z)) \text{ for } n\in \mathbb{N}^+$,
		\end{enumerate} where $g(z):=z^2+z$.
	\end{definition}
	
	The reason we bring in $T_n$ is that it has a nice factorization.
	
	\begin{theorem}\label{factor Tn}
		For every $n\in \mathbb{N}$, we have
		\begin{align*}
			T_n(z)=\left( g^{\circ n}(z)+1\right) \prod_{j=0}^{n-1}\left( g^{\circ j}(z)+2\right) ^{3^{n-j-1}}.
		\end{align*}
	\end{theorem}
	
	We also can write $M_n$ with the help of $T_n$.
	
	\begin{theorem}\label{Tn and degrees}
		For every $n \in \mathbb{N}$, the polynomial $T_n$ is monic with degree $3^n$ and has the relation with $M_n(x)$ as the form
		\begin{align}\label{Tn and Mn}
			M_n(x)=\frac{2^{\frac{3^{n+1}+1}{2}}}{\left( \phi(x)\right) ^{3^n}}T_n(\phi(x))
		\end{align}
		where $\phi(x):=\frac{4}{x-1}$ and it satisfies $g\circ \phi =\phi \circ f$.
	\end{theorem}
	Through those we get a clear view about zeros of $M_n$. At the end of this section, we prove Corollary \ref{coefficients of Mn}.
	
	\begin{proof}[Proof of Theorem \ref{property of Mn}]
		We prove our first claim by induction.
		
		Notice $$y U_0(y)=y^4,\ y V_0(y)=1$$ and $$y Z_0(y)=2y U_0(y)+6y V_0(y),$$ so it holds for $n=0$.
		
		Assume $y^{3^{n-1}}U_{n-1}(y)$ and $y^{3^{n-1}}V_{n-1}(y)$ are polynomials in terms of $y^4$ with integer coefficients, multiplying $y^{3^n}$ with \eqref{U} and \eqref{V} on both sides, we have:
		
		\begin{align*}
			y^{3^n}U_n(y)=\left( y^{3^{n-1}}U_{n-1}(y)\right) ^3+3\left( y^{3^{n-1}}U_{n-1}(y)\right) \cdot \left( y^{3^{n-1}}V_{n-1}(y)\right) ^2+4\left( y^{3^{n-1}}V_{n-1}(y)\right) ^3
		\end{align*}
		and
		\begin{align*}
			y^{3^n}V_n(y)&=\left( y^{3^{n-1}}U_{n-1}(y)\right) ^2\cdot \left( y^{3^{n-1}}V_{n-1}(y)\right) +4\left( y^{3^{n-1}}U_{n-1}(y)\right) \cdot \left( y^{3^{n-1}}V_{n-1}(y)\right) ^2\\
			&\qquad\qquad\qquad\qquad\qquad\qquad\qquad\qquad\qquad\qquad\qquad\ \ \  +3\left( y^{3^{n-1}}V_{n-1}(y)\right) ^3.
		\end{align*}
		So the assumption also holds for $n$. We proved the first claim.
		
		Now we can say for every $n \in \mathbb{N}$, there are polynomials $\tilde{U}_n$ and $\tilde{V}_n$ with integer coefficients satisfying:
		
		\begin{align*}
			\tilde{U}_n(y^4):=y^{3^n}\cdot U_n(y);\ \tilde{V}_n(y^4):=y^{3^n}\cdot V_n(y).
		\end{align*}
		Then by definitions of $Z_n$ and $M_n$(see \ref{Zn} and \ref{def of Mn}),we can express $M_n$ like:
		\begin{align*}
			M_n=\tilde{U}_n+3\tilde{V}_n.
		\end{align*}
		So $M_n(x)$ belongs to $\mathbb{Z}[x].$
		
		To get the recursion relation, we rewrite
		\begin{align*}
			M_n=\tilde{U}_n+3\tilde{V}_n=\tilde{V}_n\cdot \left( \frac{\tilde{U}_n}{\tilde{V}_n}+3\right).
		\end{align*} Through \eqref{initial values for UV}, the initial values for $\frac{U_n}{V_n}$ are:
		\begin{align*}
			\frac{U_0}{V_0}=y^4;\ \frac{U_1}{V_1}=\frac{y^8-y^4+4}{y^4+3}.
		\end{align*}
		We can see the natural map between these two is $f(y^4)$.
		
		What we want to prove next is
		\begin{align}\label{iteration}
			\frac{\tilde{U}_n}{\tilde{V}_n}=\frac{\tilde{U}_{n-1}}{\tilde{V}_{n-1}}\circ f.
		\end{align}
		Assume it holds for $n-1$, we start from the left handside:
		\begin{align*}
			\frac{\tilde{U}_n}{\tilde{V}_n}&=\frac{(\tilde{U}_{n-1})^3+3\tilde{U}_{n-1}\cdot (\tilde{V}_{n-1})^2+4(\tilde{V}_{n-1})^3}{(\tilde{U}_{n-1})^2\cdot \tilde{V}_{n-1}+4\tilde{U}_{n-1}\cdot (\tilde{V}_{n-1})^2+3(\tilde{V}_{n-1})^3}\\
			&=\frac{(\tilde{U}_{n-1})^2-\tilde{U}_{n-1}\cdot \tilde{V}_{n-1}+4(\tilde{V}_{n-1})^2}{\tilde{U}_{n-1}\cdot \tilde{V}_{n-1}+3(\tilde{V}_{n-1})^2}\\
			&=\frac{\left( \frac{\tilde{U}_{n-1}}{\tilde{V}_{n-1}}\right) ^2-\frac{\tilde{U}_{n-1}}{\tilde{V}_{n-1}}+4}{\frac{\tilde{U}_{n-1}}{\tilde{V}_{n-1}}+3}\\
			&=\frac{\left( \frac{\tilde{U}_{n-2}}{\tilde{V}_{n-2}}\right) ^2-\frac{\tilde{U}_{n-2}}{\tilde{V}_{n-2}}+4}{\frac{\tilde{U}_{n-2}}{\tilde{V}_{n-2}}+3}\circ f~(by~induction)\\
			&=\frac{(\tilde{U}_{n-2})^2-\tilde{U}_{n-2}\cdot \tilde{V}_{n-2}+4(\tilde{V}_{n-2})^2}{\tilde{U}_{n-2}\cdot \tilde{V}_{n-2}+3(\tilde{V}_{n-2})^2}\circ f\\
			&=\frac{\tilde{U}_{n-1}}{\tilde{V}_{n-1}}\circ f .
		\end{align*}
		Thus we proved \eqref{iteration}.
		
		With that, we can continue the computation of $M_n$:
		\begin{align*}
			M_n&=\tilde{V}_n\cdot \left( \frac{\tilde{U}_n}{\tilde{V}_n}+3\right) \\
			&=\tilde{V}_n\cdot \left( \frac{\tilde{U}_{n-1}}{\tilde{V}_{n-1}}\circ f+3\right) \\
			&=\frac{\tilde{V}_n}{\tilde{V}_{n-1}\circ f}\cdot \left[ \left( \tilde{U}_{n-1}+3\tilde{V}_{n-1}\right)\circ f \right] \\
			&=\frac{\tilde{V}_n}{\tilde{V}_{n-1}\circ f}\cdot \left( M_{n-1}\circ f \right).
		\end{align*}
		
		If we prove for every $n \in \mathbb{N}^+$, the first factor equals ${[(x+1)(x+3)]^{3^{n-1}}}$, we can say the recursive relation holds for our polynomial series.
		
		By observation, we find $$\frac{\tilde{V}_1 (x)}{\tilde{V}_0 (f(x))}=(x+1)(x+3),$$ so the following equation is sufficient: $$\frac{\tilde{V}_n}{\tilde{V}_{n-1}\circ f}=\left( \frac{\tilde{V}_{n-1}}{\tilde{V}_{n-2}\circ f}\right)^3.$$
		
		We compute it from the left handside:
		\begin{align*}
			\frac{\tilde{V}_n}{\tilde{V}_{n-1}\circ f}&=\frac{\tilde{U}_{n-1}^2\cdot \tilde{V}_{n-1}+4\tilde{U}_{n-1}\cdot \tilde{V}_{n-1}^2+3\tilde{V}_{n-1}^3}{\tilde{V}_{n-1}\circ f}\\
			&=\frac{\left( \frac{\tilde{U}_{n-1}}{\tilde{V}_{n-1}}\right) ^2+4\frac{\tilde{U}_{n-1}}{\tilde{V}_{n-1}}+3}{\frac{\tilde{V}_{n-1}\circ f}{\tilde{V}_{n-1}^3}}\\
			&=\frac{\left( \left( \frac{\tilde{U}_{n-2}}{\tilde{V}_{n-2}}\right) ^2+4\frac{\tilde{U}_{n-2}}{\tilde{V}_{n-2}}+3\right) \circ f}{\frac{\tilde{V}_{n-1}\circ f}{\tilde{V}_{n-1}^3}}\\
			&=\frac{\left( \tilde{U}_{n-2}^2\cdot \tilde{V}_{n-2}+4\tilde{U}_{n-2}\cdot \tilde{V}_{n-2}^2+3\tilde{V}_{n-2}^3\right) \circ f}{\frac{\left[ \tilde{V}_{n-1}\circ f\right] \cdot \left[ \tilde{V}_{n-2}^3\circ f\right] }{\tilde{V}_{n-1}^3}}\\
			&=\frac{\left[ \tilde{V}_{n-1}\circ f\right] \cdot \tilde{V}_{n-1}^3}{\left[ \tilde{V}_{n-1}\circ f\right] \cdot \left[ \tilde{V}_{n-2}^3\circ f\right] }\\
			&=\left( \frac{\tilde{V}_{n-1}}{\tilde{V}_{n-2}\circ f}\right)^3.
		\end{align*}
	\end{proof}
	
	Now we focus on \(M_n\), the numerator. To gain a clearer view of its zeros and simplify future iterations, we use the conjugation $\phi$ to transform \(f\) into the polynomial \(g\). After that we get the relationship between $T_n$ and $M_n$.
	\begin{proof}[Proof of Theorem \ref{Tn and degrees}]
		
		We prove \eqref{Tn and Mn} together with properties of $T_n$ by induction.
		
		Of course $T_0$ is monic with degree $3^0$ by definition. For $M_0(x)$, we rewrite it like: $$M_0(x)=x+3 = \frac{x+3}{x-1}\cdot (x-1) =\left( \frac{4+x-1}{x-1}\right)\cdot \left( \frac{x-1}{4}\right) \cdot 4 = \frac{4}{\phi(x)}\cdot T_0(\phi(x)).$$ So \eqref{Tn and Mn} holds for $M_0$ and $T_0$.
		
		Assume \eqref{Tn and Mn} holds for $n-1$ and $T_{n-1}$ is monic with degeree $3^{n-1}$, then by Definition \ref{def Tn}, the term $(z+2)^{3^{n-1}}$ makes $T_n$ a monic polynomial and we can count the degree of $T_n(z)$ as $3^{n-1}+2\cdot 3^{n-1}=3^n$.
		
		For $M_n$, we use Theorem \ref{property of Mn} to involve $T_{n-1}$:
		\begin{align*}
			M_n(x)&=M_{n-1}(f(x))[(x+1)(x+3)]^{3^{n-1}}\\
			&=\frac{2^{\frac{3^n+1}{2}}(f(x)-1)^{3^{n-1}}}{4^{3^{n-1}}}
			\cdot T_{n-1}(\phi (f(x)))\cdot [(x+1)(x+3)]^{3^{n-1}}.
		\end{align*}
		
		Now we compute the first factor. By pluging in $f(x)$, we get
		\begin{align*}
			\frac{(x-1)^{2\cdot 3^{n-1}}}{2^{\frac{3^{n-1}-1}{2}}(x+3)^{3^{n-1}}}.
		\end{align*}
		Eliminating common terms, we have $M_n(x)$ as:
		\begin{align*}
			M_n(x)=2^{\frac{1-3^{n-1}}{2}}(x-1)^{2\cdot 3^{n-1}}(x+1)^{3^{n-1}}\cdot T_{n-1}(\phi (f(x))).
		\end{align*}
		Next we create $\phi(x)$:
		\begin{align*}
			M_n(x)&=2^{\frac{1-3^{n-1}}{2}}\cdot \frac{4^{2\cdot 3^{n-1}}}{(\phi(x))^{2\cdot 3^{n-1}}}\cdot \left( \frac{4}{\phi(x)}+2\right)^{3^{n-1}} \cdot T_{n-1}(g(\phi(x)))\\
			&=\frac{2^{\frac{3^{n+1}+1}{2}}}{\left(\phi(x) \right) ^{3^n}}\cdot \left[ \left( 2+\phi(x)\right) ^{3^{n-1}}\cdot T_{n-1}(g(\phi(x)))\right] \\
			&=\frac{2^{\frac{3^{n+1}+1}{2}}}{(\phi(x))^{3^n}}\cdot T_n(\phi(x)).
		\end{align*}
		We proved our assumption.
		
		For the commutative diagram about $\phi$, it is clear that
		\begin{align*}
			g\circ \phi (x)=\frac{16}{(x-1)^2}+\frac{4}{x-1}=\frac{4(x+3)}{(x-1)^2}
		\end{align*}
		and
		\begin{align*}
			\phi \circ f(x)=\frac{4}{\frac{x^2-x+4}{x+3}-1}=\frac{4(x+3)}{x^2-2x+1}=\frac{4(x+3)}{(x-1)^2}.
		\end{align*}
	\end{proof}
	
	Next we show factorization of $T_n$.
	\begin{proof}[Proof of Theorem \ref{factor Tn}]
		Likewise, we prove it by induction.
		
		For \(T_0\), it is indeed \(g^0(z)+1\). Assuming we have such equality for \(T_{n-1}\), then \(T_n\) will be:
		\begin{align*}
			T_n(z)&=(z+2)^{3^{n-1}}T_{n-1}(g(z))\\
			&=(g^{\circ (n-1)}(g(z))+1)(g^0(z)+2)^{3^{n-1}} \prod_{j=0}^{n-2}(g^{\circ j}(g(z))+2)^{3^{n-j-2}}\\
			&=(g^{\circ n}(z)+1)\prod_{j=0}^{n-1}(g^{\circ j}(z)+2)^{3^{n-j-1}}.
		\end{align*}
	\end{proof}
	
	At the end, we show properties for coefficients of $M_n$.
	\begin{proof}[Proof of Corollary \ref{coefficients of Mn}]
		For the degree of \(M_n\), by \eqref{Tn and Mn}, it is sufficient to show that \(T_n\) has a nonzero constant term. Considering the recursive relation of \(T_n\), it is clear that its constant term is nonzero. Determining the degree of the rational function \(Z_n\) is straightforward, as we can directly derive it from \eqref{def of Mn}.
		
		Now we deal with the leading term of \(M_n(x)\) and the properties of the coefficients of \(M_n(2x+1)\). From Theorem \ref{property of Mn} and the degree of \(M_n(x)\), we can write \(M_n(x+1)\) as follows:
		\begin{align*}
			M_n(x+1)&=\frac{2^{\frac{3^{n+1}+1}{2}}}{\left( \frac{4}{x-1+1}\right)^{3^n} }T_n\left( \frac{4}{x-1+1}\right) \\
			&=2^{\frac{3^{n+1}+1}{2}}\left( \frac{x^{2^n}}{4^{2^n}}g^{\circ n}\left( \frac{4}{x}\right) +\frac{x^{2^n}}{4^{2^n}}\right) \prod_{j=0}^{n-1}\left( \frac{x^{2^j}}{4^{2^j}}g^{\circ j}\left( \frac{4}{x}\right) +\frac{x^{2^j}}{4^{2^j-\frac{1}{2}}}\right) ^{3^{n-j-1}}\\
			&=2^{\frac{3^{n+1}+1}{2}}\left( \frac{x^{2^n}}{2^{2^{n+1}}}g^{\circ n}\left( \frac{4}{x}\right) +\frac{x^{2^n}}{2^{2^{n+1}}}\right) \prod_{j=0}^{n-1}\left( \frac{x^{2^j}}{2^{2^{j+1}}}g^{\circ j}\left( \frac{4}{x}\right) +\frac{x^{2^j}}{2^{2^{j+1}-1}}\right) ^{3^{n-j-1}}.
		\end{align*}
		In step 2 we distribute $\left( \frac{x}{4}\right) ^{3^n}$ into every factor polynomial with proper order ($2^{2n}$ for the first one and $2^{2j}$ for the rest).
		
		The polynomial $g(z)$ is of degree $2$ with no constant term, so the coefficient of the leading term is to compare the order of $2$ of the leading term in every factor polynomial. The order of $2$ for their denominators is
		\begin{align*}
			2^{n+1}+\sum_{j=0}^{n-1}(2^{n+1}-1)\cdot 3^{n-j-1}&=2^{n+1}+3^n\sum_{j=1}^{n-1}\frac{2^{j+1}-1}{3^{j+1}}\\
			&=2^{n+1}+3^n\cdot \left( 2-2\cdot \left( \frac{2}{3}\right)^n-\frac{1}{2}+\frac{1}{2}\cdot \left( \frac{1}{3}\right)^n  \right) \\
			&=2^{n+1}+\frac{3^{n+1}}{2}-2^{n+1}+\frac{1}{2}\\
			&=\frac{3^{n+1}+1}{2}.
		\end{align*}
		
		It is the same as the order of $2$ for the first factor. In that sense, we can distribute $2^{\frac{3^{n+1}+1}{2}}$ properly in every factor polynomial ($2^{n+1}$ for the first and $2^{m+1}-1$ for the rest) to express $M_n(x+1)$ as a product of monic polynomials:
		
		\begin{align*}
			M_n(x+1)=\left( x^{2^n}\cdot g^{\circ n}\left( \frac{4}{x}\right) +x^{2^n}\right)\prod_{j=0}^{n-1}\left( \frac{x^{2^j}}{2} \cdot g^{\circ j}\left( \frac{4}{x}\right) +x^{2^j}\right) ^{3^{n-j-1}}.
		\end{align*}
		It is clear that $M_n(x+1)$ is monic, so is $M_n(x)$.
		
		When we compare the coefficients between the leading term and other lower order terms, the following parttern holds: if the order of $x$ is $3^n-k$ where $0< k\leq 3^n$, its coefficient will have a new factor $\left( \frac{4}{2}\right)^k=2^k$ comparing to $1$. Then $\frac{M_n(2x+1)}{2^{3^n}}\in \mathbb{Z}[x]$.
	\end{proof}
	
	\section{Convergence Behavior of the Corresponding Measure}
	
	Recall from \eqref{def of zetan} in Section 1 that the normalized Laplacian for partition function $Z_n$ is
	\begin{align*}
		\zeta_n=\frac{1}{4\cdot 3^n}\Delta \log |Z_n|.
	\end{align*}
	In this section we prove Theorem \ref{zeta converge}, which states it converges in the sense of total variation.
	
	For later use, here we also introduce the normalized Laplacian associated to $M_n$:
	\begin{align}\label{definition of mu}
		\mu_n:=\frac{1}{3^n}\Delta \log |M_n|.
	\end{align}
	and its property:
	\begin{corollary}\label{mu converge}
		$\left\lbrace \mu_n \right\rbrace _{n=0}^{\infty}$ converges in the norm of total variation and its limit $\mu_{\infty}$ has the expression:
		\begin{align*}
			\mu_{\infty}:=\sum_{j=0}^{\infty}\frac{1}{3^{j+1}}\left(  \sum_{\omega \in f^{-j}(-1)}\delta(\omega)\right) -\delta(\infty).
		\end{align*}
	\end{corollary}
	
	For the beginning, we need to clarify the multiplicities for all distinct roots of $T_n$, which is helpful in the proof of $\zeta_{\infty}$.
	\begin{lemma}\label{multiplicity of Tn}
		\hfill
		\begin{enumerate}
			\item[1.]The polynomial $g^{\circ n}(z)+1$, as well as polynomials $g^{\circ j}(z)+2$ for $m\in \left\lbrace 0,1,\ldots ,n-1\right\rbrace $ only have simple roots.
			\item[2.] Every pair of distinct polynomials from item 1 have no common roots.
		\end{enumerate}
	\end{lemma}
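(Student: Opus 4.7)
My plan is to reduce both statements to tracking two forward orbits of the quadratic map $\hat{f}(z)=z^2+z$. The only critical point of $\hat{f}$ is $z=-1/2$ (since $\hat{f}'(z)=2z+1$), with $\hat{f}(-1/2)=-1/4$; meanwhile the roots of $\hat{f}^{\circ m}(z)+2$ are by definition the $m$-th preimages of $-2$. Both items will then fall out of analyzing what the orbits of $-1/4$ and of $-2$ look like.

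For item 1, a root $z_0$ of $\hat{f}^{\circ n}(z)+c$ (with $c\in\{1,2\}$) is a multiple root iff
\[
(\hat{f}^{\circ n})'(z_0)=\prod_{k=0}^{n-1}\bigl(2\hat{f}^{\circ k}(z_0)+1\bigr)=0,
\]
which forces $\hat{f}^{\circ k}(z_0)=-1/2$ for some $0\le k\le n-1$. Applying $\hat{f}$ once gives $\hat{f}^{\circ(k+1)}(z_0)=-1/4$, so $\hat{f}^{\circ j}(-1/4)=-c$ with $j=n-k-1\ge 0$. Thus item 1 reduces to the assertion that the forward orbit of $-1/4$ under $\hat{f}$ never hits $-1$ or $-2$. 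I would prove this by a $2$-adic denominator argument: writing $\hat{f}^{\circ j}(-1/4)=p_j/2^{k_j}$ in lowest terms, the recursion $a\mapsto a(a+1)$ gives $p_{j+1}=p_j(p_j+2^{k_j})$ with denominator $2^{2k_j}$, so by induction $p_j$ stays odd and $k_j$ doubles at every step (starting from $k_0=2$). Hence every iterate has a nontrivial power-of-two denominator, is never an integer, and in particular never equals $-1$ or $-2$.

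For item 2, if $z_0$ were a common root of $\hat{f}^{\circ n}(z)+1$ and $\hat{f}^{\circ m}(z)+2$ with $m\le n-1$, then applying $\hat{f}^{\circ (n-m)}$ to $\hat{f}^{\circ m}(z_0)=-2$ would give $\hat{f}^{\circ(n-m)}(-2)=-1$; similarly, a common root of $\hat{f}^{\circ m_1}(z)+2$ and $\hat{f}^{\circ m_2}(z)+2$ with $m_1<m_2\le n-1$ would force $\hat{f}^{\circ(m_2-m_1)}(-2)=-2$. Both are killed by tracking the forward orbit of $-2$: since $\hat{f}(-2)=2$, a one-line induction shows that $b_j:=\hat{f}^{\circ j}(-2)$ satisfies $b_j\ge 2$ and $b_{j+1}=b_j(b_j+1)>b_j$ for every $j\ge 1$, so $b_j\notin\{-1,-2\}$ whenever $j\ge 1$. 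The only genuinely nontrivial step is the $2$-adic denominator estimate in item 1; once one recognizes that the whole question is governed by the critical orbit of $\hat{f}$, the remaining bookkeeping is routine.
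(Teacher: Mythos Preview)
Your proof is correct and follows the same overall strategy as the paper: both reduce item~1 via the chain rule to the question of whether the critical orbit $\{\hat{f}^{\circ j}(-1/2)\}_{j\ge 0}$ ever hits $-1$ or $-2$, and both reduce item~2 to the forward orbit of $-2$, observing that $\hat{f}(-2)=2$ and thereafter the orbit is positive and increasing.

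The one genuine difference is in how you rule out $-1$ and $-2$ from the critical orbit. You use a $2$-adic argument: the iterates $\hat{f}^{\circ j}(-1/4)$ have odd numerator and denominator $2^{2^{j+1}}$, hence are never integers. The paper instead uses a one-line real-dynamics argument: for $-1<z<0$ one has $z<\hat{f}(z)=z(z+1)<0$ (and in fact $\hat{f}(z)>-1$ since $z^2+z+1>0$), so the orbit of $-1/2$ is increasing and trapped in $(-1,0)$, hence never reaches $-1$ or $-2$. The paper's version is arguably the more elementary of the two and avoids the bookkeeping you flag as the ``only genuinely nontrivial step''; your $2$-adic argument, on the other hand, gives slightly more precise arithmetic information about the iterates. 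Either route completes the proof without difficulty.
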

	
	\begin{proof}
		For \(g^{\circ j}(z) + 2 = 0\) with \(j \in \left\{0, 1, \ldots, n-1\right\}\), if it had a multiple root \(z_1 \in \mathbb{C}\), its derivative would vanish at \(z_1\). By the chain rule, we compute the derivative as \(\left(g^{\circ j}(z) + 2\right)^{\prime} = \prod_{k=0}^{j-1} g^{\prime}(g^{k}(z))\). Thus, at least one factor must be zero at \(z_1\). We know \(g^{\prime}(z) = 2z + 1\) has a single root at \(-\frac{1}{2}\). In other words, \(g^{k}(z_1) = -\frac{1}{2}\) for at least one \(k \in \left\{0, 1, \ldots, j-1\right\}\). Notice the inequality \(z < z(z + 1) = g(z) < 0\) for \(-1 < z < 0\); by induction, we can prove \(\left\{g^{j}(-\frac{1}{2}) \right\}_{j=0}^{j-k}\) is an increasing sequence with negative terms starting from \(-\frac{1}{2}\). This implies we can't have \(g^{\circ j}(z_1) = g^{j-k}(-\frac{1}{2}) = -2\). In other words, \(g^{\circ j}(z) + 2 = 0\) has only simple roots for \(j \in \left\{0, 1, \ldots, n-1\right\}\). Since \(-1 < -\frac{1}{2}\), the same argument holds if \(2\) is replaced by \(1\). Therefore, \(g^{\circ n}(z) + 1\) has no repeated roots either.
		
		Now, consider Item 2. Assume \(g^{\circ j_1}(z_2) = g^{\circ j_2}(z_2) = -2\) for some \(j_1 < j_2 < n \in \mathbb{N}\) and \(z_2 \in \mathbb{C}\). We compute \(g^{\circ (j_1+1)}(z_2) = g(-2) = 2\) and all the future iterations give positive results, excluding \(-2\). This means \(g^{\circ j_2}(z_2)\) can't be \(-2\); the assumption is wrong, so there are no common roots between \(g^{\circ j_1}(z) + 2 = 0\) and \(g^{\circ j_2}(z) + 2 = 0\). Notice that we can't get \(-1\) either from \(g^{\circ j}(z_2) = -2\). Therefore, \(g^{\circ n}(z) + 1 = 0\) has no common roots with \(g^{\circ j}(z) + 2 = 0\) for any integer \(j\) between \(1\) and \(n-1\).
	\end{proof}
	
	Now we can show the convergence of $\zeta_n$.
	
	\begin{proof}[Proof of Theorem \ref{zeta converge}]
		At first, we define:
		\begin{align*}
			\tau_n:=\frac{1}{3^n}\Delta \log |T_n|.
		\end{align*}
		Then we can also express $\tau_n$ like: $$\tau_n(z)=\frac{1}{3^n}\sum_{i=1}^{3^n}\Delta \log |z-z_i|.$$
		In here $z_i$, $1\leq i \leq 3^n$ are all the roots of $T_n$ counting multiplicities.
		
		To understand the convergence of $\tau_n$, we need to focus on the distribution of zeros for $T_n$.From the factorization of $T_n(z)$ and Lemma \ref{multiplicity of Tn}, we divide its zeros into different groups and rewrite $\tau_n$: $$\tau_n=\sum_{j=0}^{n-1}\frac{1}{3^{j+1}}\left(  \sum_{\omega \in g^{-j}(-2)}\delta(\omega)\right) +\frac{1}{3^n} \sum_{\omega \in g^{-n}(-1)}\delta(\omega)-\delta(\infty).$$
		
		Then we use $\phi \circ q$ and the connection between $Z_n$ and $T_n$ to pull-back $\tau_n$ to $\zeta_n$. Through  \eqref{Tn and Mn} and \eqref{def of Mn} we can do the following computation:
		\begin{align}\label{expression of zetan}
			\zeta_n&=\frac{1}{4\cdot 3^n}\Delta \log |Z_n| \notag\\
			&=-\frac{1}{4}\left( \delta(0)-\delta(\infty)\right) -\frac{1}{4}(\phi \circ q)^*(\delta(0)-\delta(\infty))+\frac{1}{4}(\phi \circ q)^*\tau_n  \notag\\
			&=-\frac{1}{4}\left( \delta(0)-\delta(\infty)\right) -\frac{1}{4}(\phi \circ q)^*(\delta(0)-\delta(\infty)) \notag\\
			&\ \ \ \ +\sum_{j=0}^{n-1}\frac{1}{4\cdot 3^{j+1}}\left(  \sum_{\omega \in q^{-1}\circ f^{-j}(-1)}\delta(\omega)\right) +\frac{1}{4\cdot 3^n} \sum_{\omega \in q^{-1}\circ f^{-n}(-3)}\delta(\omega)-\frac{1}{4}(\phi \circ q)^*\delta(\infty) \notag\\
			&=-\frac{1}{4}\delta(0)-\frac{3}{4}\delta(\infty)+\sum_{j=0}^{n-1}\frac{1}{4\cdot 3^{j+1}}\left(  \sum_{\omega \in q^{-1}\circ f^{-j}(-1)}\delta(\omega)\right) +\frac{1}{4\cdot 3^n} \sum_{\omega \in q^{-1}\circ f^{-n}(-3)}\delta(\omega).
		\end{align}
		
		In the sense of total variation, for any $n \in \mathbb{N^+}$, we have
		\begin{align*}
			|\zeta_{n}-\zeta_{\infty}|&=\left| -\frac{1}{12}\sum_{j=n}^{\infty}\frac{1}{3^j}\left( \sum_{\omega \in q^{-1}\circ f^{-j}(-1)}\delta(\omega)\right) +\frac{1}{4\cdot 3^n}\sum_{\omega \in q^{-1}\circ f^{-n}(-3)}\delta(\omega) \right| \\
			&=\frac{1}{12}\sum_{j=n}^{\infty}4\cdot \left( \frac{2}{3}\right) ^j+\frac{4}{4}\cdot \left( \frac{2}{3}\right)^n \\
			&=2\cdot \left( \frac{2}{3}\right) ^n.
		\end{align*} It goes to zero when $n$ goes to $\infty$.
	\end{proof}
	
	\begin{proof}[Proof of Corollary \ref{mu converge}]
		With \eqref{def of Mn} we can compute
		\begin{align*}
			\zeta_n=\frac{1}{4\cdot 3^n}\Delta \log |Z_n|=-\frac{1}{4}\left( \delta(0)-\delta(\infty)\right) +\frac{1}{4}q^*\mu_n.
		\end{align*}
		Following the expression of $\zeta_n$ in \eqref{expression of zetan}, we get
		\begin{align*}
			\mu_n=\sum_{j=0}^{n-1}\frac{1}{3^{j+1}}\left(  \sum_{\omega \in f^{-j}(-1)}\delta(\omega)\right) +\frac{1}{3^n} \sum_{\omega \in f^{-n}(-3)}\delta(\omega)-\delta(\infty).
		\end{align*}
		Using the same way as $\zeta_n$, we have $\mu_n$ converge to $\mu_{\infty}$ in the sense of total variation.
	\end{proof}
	
	\section{Dynamical Performance of the Inverse Branch}
	
	In this section, we devise a method for selecting the inverse branch of $f^{-1}$ that tends to $\infty$ when iterated over a specific domain $\mathbb{H}_0$, where it is defined as
	\begin{align*}
		\mathbb{H}_0:=\left\lbrace x\in \mathbb{C}|\Re (x)>10\right\rbrace .
	\end{align*}
	We also establish fundamental properties of this inverse branch. In the ensuing theorem, $h$ denotes the unique holomorphic inverse branch of $f$ defined on $\hat{\mathbb{C}}\backslash [-15,1]$, satisfying $h(x)-x-4 \to 0$ as $x \to \infty$, as shown in Lemma \ref{def of h} later.
	
	In the following discussion, our $\log$ refers to the branch of the logarithm function defined on $\mathbb{C}\backslash (\,-\infty,0]$, with the value $0$ assigned to the point $1$.
	
	\begin{theorem}\label{iteration of h}
		For all $n\in \mathbb{N^+}$ and $x \in \mathbb{H}_0$, $h^{\circ n}(x)$ is well defined and belongs to $\mathbb{H}_0$.
		There is a positive number $C>0$ such that for every $x \in \mathbb{H}_0$ and $n \in \mathbb{N^+}$, the following inequality holds:
		\begin{align}\label{main estimate}
			\left| h^{\circ n}(x)-x-4n+4 \log \left( \frac{x+4n}{x}\right) \right| < \frac{C\log \Re(x)}{\Re(x)}.
		\end{align}
		In particular, for $x\in \mathbb{H}_0$, we have:
		\begin{align}\label{log of h}
			\lim_{n \to \infty}\Im(\log(h^{\circ n}(x)))=0.
		\end{align}
	\end{theorem}
	
	In this section, our objective is to establish the theorem. We begin by presenting necessary calculus results in Section 5.1. Subsequently, in Section 5.2, we outline the definition of the desired inverse branch of $f$ and provide the proof of its key properties.
	
	\subsection{Preliminaries}
	
	First we choose an inverse branch of the square map.
	\begin{lemma}
		Define
		\begin{align*}
			s:\mathbb{C}\backslash (\,-\infty,0]\longrightarrow \mathbb{C}
		\end{align*}
		as the function:
		\begin{align*}
			s(w)&:=r^{\frac{1}{2}}e^{i\frac{\theta}{2}}\\
			where\ w=re^{i\theta}\ for&\ some\ r >0, \theta \in (-\pi,\pi).
		\end{align*}
		Then $s$ is holomorphic and $\left( s\left( w\right) \right) ^2=w$ for $w\in \mathbb{C}\backslash (\,-\infty,0]$.
		
	\end{lemma}
	\begin{proof}
		The domain $\mathbb{C}\backslash (\,-\infty,0]$ is simply connected so $s$ is holomorphic on it. It is clear that
		\begin{align*}
			\left( s(re^{i\theta})\right) ^2=\left( r^{\frac{1}{2}}e^{i\frac{\theta}{2}}\right) ^2=re^{i\theta}.
		\end{align*}
	\end{proof}
	
	We introduce a new function to prepare the branch of $f^{-1}$ which has connection with $s$.
	
	\begin{lemma}\label{def of a}
		Define a quadratic function like:
		\begin{align*}
			\mathfrak{a}(x):=1-\frac{64}{(x+7)^2}.
		\end{align*}
		If $x$ belongs to $\mathbb{C}\backslash [-15,1]$, $\mathfrak{a}(x)$ would locate in $\mathbb{C}\backslash (\,-\infty,0]$.
		
		Further, if $x$ belongs to $\mathbb{H}_0$,  then $\Re(\mathfrak{a}(x))>0$ and between norms there is:
		\begin{align*}
			\left| \mathfrak{a}(x)\right| >\mathfrak{a}(10)=\frac{225}{289}.
		\end{align*}
	\end{lemma}
	
	\begin{proof}
		Assume $\mathfrak{a}(x)=-r$ for some $r\geq 0$, then we get
		\begin{align*}
			1+r=\frac{64}{(x+7)^2}.
		\end{align*}
		So $x$ is real and $(x+7)^2\leq 64$, which implies $x \in [-15,1]$. We proved the first claim.
		
		For $x \in \mathbb{H}_0$, we know
		\begin{align*}
			\left| \frac{64}{(x+7)^2}\right| <\frac{64}{(10+7)^2}<1,
		\end{align*}
		then
		\begin{align*}
			\Re\left( \frac{64}{(x+7)^2}\right) <1 \Longrightarrow \Re(\mathfrak{a}(x))>0.
		\end{align*}
		For the norm,
		\begin{align*}
			|\mathfrak{a}(x)|\geq 1-\frac{64}{|x+7|^2}>1-\frac{64}{(10+7)^2}=\mathfrak{a}(10)=\frac{225}{289}.
		\end{align*}
	\end{proof}
	
	We also need some calculus results to state further properties of $f^{-1}$.
	
	\begin{lemma}\label{harmonic lemma}
		For any $x \in \mathbb{C}$ with $\Re(x)>0$, $n \in \mathbb{N^+}$ and positive number $p$, the finite sum of harmonic series have the following estimate:
		\begin{align}\label{harmonic}
			\left| \sum_{j=0}^{n-1}\frac{1}{x+pj}-\frac{1}{p}\log\left(\frac{x+pn}{x} \right) \right| <\sum_{j=0}^{n-1}\frac{p}{|x+pj|^2}.
		\end{align}
		Further, when $x\in \mathbb{H}_0$ and $1\leq p\leq 10$, for every $n\in \mathbb{N^+}$ the series also satisfies:
		\begin{align}\label{harmonic and log}
			\left| \sum_{j=0}^{n-1}\frac{1}{x+pj}\right| <\frac{2}{p}\left| \log(x+pn) \right| .
		\end{align}
		
		Similarly, if $x>10$ and $1\leq p\leq 6$, for all $n\in \mathbb{N^+}$, the following inequalies hold:
		\begin{align}
			\sum_{j=0}^{n-1}\frac{1}{(x+pj)^3}&<\frac{10}{px^2}\text{ , } \label{cubic inequality}\\
			\sum_{j=0}^{n-1}\frac{1}{(x+pj)^2}&<\frac{10}{px}\text{ , } \label{quadratic inequality}\\
			\sum_{j=0}^{n-1}\frac{\log (x+pj)}{(x+pj)^2}&<\frac{10\log x}{px}. \label{log quadratic inequality}
		\end{align}
	\end{lemma}
	
	\begin{proof}
		To prove \eqref{harmonic}, we rewrite the left side as:
		\begin{align*}
			\left| \sum_{j=0}^{n-1}\frac{1}{x+pj}-\frac{1}{p}\log\left(\frac{x+pn}{x} \right) \right| &=\left| \sum_{j=0}^{n-1} \frac{1}{x+pj}-\int_{0}^{n}\frac{1}{x+pl} \, \mathrm{d} l  \right| \\
			&< \sum_{j=0}^{n-1}\left| \frac{1}{x+pj}-\int_{j}^{j+1}\frac{1}{x+pl} \, \mathrm{d} l\right|  \\
			&=\sum_{j=0}^{n-1}\left| \int_{j}^{j+1}\frac{p(l-j)}{(x+pj)(x+pl)} \, \mathrm{d} l\right| \\
			&<\sum_{j=0}^{n-1}\frac{p}{\left| x+pj\right| ^2} .
		\end{align*}
		
		To deal with \eqref{harmonic and log}, when $x\in \mathbb{H}_0$ and $1\leq p\leq 10$, it is clear that
		\begin{align*}
			\sum_{j=0}^{n-1}\frac{p}{\left| x+pj\right| ^2}<\sum_{j=1}^{\infty}\frac{1}{pj^2}=\frac{\pi^2}{6p}.
		\end{align*}
		Then by triangle inequality
		\begin{align*}
			\left| \sum_{j=0}^{n-1}\frac{1}{x+pj}\right| <\frac{1}{p}\left| \log\left(\frac{x+pn}{x} \right) \right| +\frac{\pi^2}{6p}<\frac{1}{p}\left| \log(x+pn) \right|+\frac{2}{p}<\frac{2}{p}\left| \log(x+pn) \right| .
		\end{align*}
		
		For the next three ineuqalities, we prove the last inequality only. After that, the previous two can be addressed in the same manner.
		
		As we did to prove \eqref{harmonic}, we can give the following estimate:
		\begin{multline*}
			\left| \sum_{j=0}^{n-1}\frac{\log (x+pj)}{(x+pj)^2}+\frac{1}{p}\left( \frac{1+\log (x+pn)}{x+pn} -\frac{1+\log x}{x}\right) \right| \\
			<\sum_{j=0}^{n-1}\int_{j}^{j+1}\frac{(x+pl)^2\log (x+pj)-(x+pj)^2\log (x+pl)}{(x+pj)^2(x+pl)^2} \, \mathrm{d} l.
		\end{multline*}
		The numerator can be computed as:
		\begin{align*}
			(x+pl)^2\log (x+pj)-(x+pj)^2\log &(x+pl)\\
			&=(x+pj)^2\log \left( \frac{x+pj}{x+pl}\right) +[p^2(l-j)^2+2xp(l-j)]\log (x+pj)\\
			&<(p^2+2xp)\log (x+pj).
		\end{align*}
		So
		\begin{align*}
			\left| \sum_{j=0}^{n-1}\frac{\log (x+pj)}{(x+pj)^2}+\frac{1}{p}\left( \frac{1+\log (x+pn)}{x+pn} -\frac{1+\log x}{x}\right) \right| &<\sum_{j=0}^{n-1}\int_{j}^{j+1}\frac{(p^2+2xp)\log (x+pj)}{(x+pj)^2(x+pl)^2} \, \mathrm{d} l \\
			&<\sum_{j=0}^{n-1}\frac{3p\log (x+pj)}{2(x+pj)^3} \\
			&<\frac{9}{10}\sum_{j=0}^{n-1}\frac{\log (x+pj)}{(x+pj)^2}.
		\end{align*}
		Then we get
		\begin{align*}
			\sum_{j=0}^{n-1}\frac{\log (x+pj)}{(x+pj)^2}<10\left| \frac{1}{p}\left( \frac{1+\log (x+pn)}{x+pn} -\frac{1+\log x}{x}\right) \right| <\frac{10\log x}{px}.
		\end{align*}
	\end{proof}
	
	\subsection{Properties of the Inverse Branch}
	
	The inverse function $h$ is defined by the following.
	
	\begin{lemma}\label{def of h}
		There exists a unique branch of $f^{-1}$, denoted as $h$, which satisfies the following properties: the domain of $h$ is $\hat{\mathbb{C}}\backslash [-15,1]$ and it approaches the translation map $x+4$:
		\begin{align*}
			h(x)-x-4 \longrightarrow 0
		\end{align*}
		as $x \to \infty$ in its domain.
	\end{lemma}
	
	\begin{proof}
		Due to properties of $\mathfrak{a}$, we can define $h$ in $\mathbb{C}\backslash [-15,1]$ like:
		\begin{align*}
			h(x):=\frac{x+1+(x+7)\cdot s\circ \mathfrak{a}(x) }{2}
		\end{align*}
		so $h$ is holomorphic in $\mathbb{C}\backslash [-15,1]$.
		
		For $x=\infty$ on the Riemann sphere, if we define $h(\infty)=\infty$, we get $h$ is continuous on $\infty$. Thus on the domain $\hat{\mathbb{C}}\backslash [-15,1]$, we can state $h$ is holomorphic.
		
		Recall from Theorem \ref{property of Mn} that
		\begin{align*}
			f(x)=\frac{x^2-x+4}{x+3}.
		\end{align*} We verify the equality $f\circ h(x)=x$ on $\hat{\mathbb{C}}\backslash [-15,1]$ by definition.
		
		First we show $h(x)\neq -3$. Assume the contrary, then
		\begin{align*}
			x+1+(x+7)\cdot s\circ \mathfrak{a}(x) =-x-7.
		\end{align*}
		Since $x\neq -7$, we get
		\begin{align*}
			s\circ \mathfrak{a}(x) =-1
		\end{align*}
		which is not possible by the definition of $s$.
		
		For $x\neq \infty$, moving $\frac{x+1}{2}$ to the left side in the definition of $h$ we get:
		\begin{align*}
			\left( h(x)-\frac{(x+1)}{2}\right) ^2=\frac{(x+7)^2}{4}\cdot \left( 1-\frac{64}{(x+7)^2}\right).
		\end{align*}
		After computation it is
		\begin{align*}
			h(x)^2-h(x)+4&=x\cdot h(x)+3x\\
		\end{align*}
		Because $h(x)\neq -3$ we can state
		\begin{align*}
			\frac{h(x)^2-h(x)+4}{h(x)+3}=x \text{ thus } f\circ h(x)=x.
		\end{align*}
		For $x=\infty$, we know $f\circ h(\infty)=f(\infty)=\infty$.
		
		The equality $f\circ h(x)=x$ implies $h$ is injective on $\hat{\mathbb{C}}\backslash [-15,1]$, then we get $h\circ f(x)=x$ for $x$ on $h(\hat{\mathbb{C}}\backslash [-15,1])$.
		
		When $x \to \infty$, we can see that $s\circ \mathfrak{a}(x)$ approaches to $1$ and its Taylor series like
		\begin{align*}
			s\circ \mathfrak{a}(x)=1-\left( \frac{64}{2(x+7)^2}\right) -\left( \frac{64^2}{8(x+7)^4}\right) -higher\ order\ terms.
		\end{align*}
		Then we can say when $x \to \infty$, $h$ satisfies:
		\begin{align*}
			h(x) \longrightarrow \frac{x+1+x+7}{2}=x+4.
		\end{align*}
		This shows that $h$ is an inverse branch of $f$ satisfying the desired properties.
		
		Notice $-15$ and $1$ are the critial values of $f$ and they are not in the domain of $h$. By Implicit Function Theorem, $h$ is locally unique. Assume there is $\tilde{h}$ satisfying the same properties, then the set
		\begin{align*}
			\left\lbrace x \in \hat{\mathbb{C}}\backslash [-15,1]\mid h(x)=\tilde{h}(x)\right\rbrace 
		\end{align*} must be closed and open. Since $h(\infty)=\tilde{h}(\infty)=\infty$, we know $h$ and $\tilde{h}$ are the same on $\hat{\mathbb{C}}\backslash [-15,1]$.
	\end{proof}
	
	Now we are prepared to prove dynamical properties of $h$.
	
	\begin{proof}[Proof of Theorem \ref{iteration of h}]
		Define
		\begin{align}\label{def of e}
			\mathfrak{e}(x):=h(x)-x-4
		\end{align}
		to be the error term. For $\mathfrak{e}(x)$, we know for all $x \in \mathbb{H}_0$, by properties of $\mathfrak{a}$ in Lemma 5.2, the following estimate holds:
		\begin{align*}
			|\mathfrak{e}(x)|&=\left| \frac{x+1+(x+7)\cdot s\circ \mathfrak{a}(x) }{2}-x-4\right| \\
			&=\left| \frac{(x+7)\cdot s\circ \mathfrak{a}(x)-(x+7)}{2}\right| \\
			&=\left| \frac{-32}{(x+7)(s\circ \mathfrak{a}(x)+1)}\right| \\
			&<\frac{32}{|10+7|\left| 1+s\circ \mathfrak{a}(10) \right| } \\
			&=\frac{32}{17\left( 1+\frac{15}{17}\right) } \\
			&=1,
		\end{align*}
		which also implies
		\begin{align*}
			-1<\Re(\mathfrak{e}(x))<1.
		\end{align*}
		Then by \eqref{def of e} we have
		\begin{align*}
			\Re(h(x))>\Re(x)+4-1>13>10.
		\end{align*}
		So $h(x)$ also belongs to $\mathbb{H}_0$.
		
		Following the same idea, with $Re(h(x))>10$, we can replace $x$ by $h(x)$ in the estimate to have:
		\begin{align*}
			-1<\Re(\mathfrak{e}(h(x)))<1\text{ and }\Re(h^{\circ 2}(x))>\Re(h(x))+3>\Re(x)+6>10.
		\end{align*}
		
		Inductively, for every $x\in H_0$ and $n \in \mathbb{N^+}$, we have $Re(h^{\circ n}(x))>10$ thus $h^{\circ n}(x)$ is well defined and belongs to $\mathbb{H}_0$. The following estimates also hold:
		\begin{align}\label{real of h}
			\Re(h^{\circ n}(x))>\Re(x)+3n.
		\end{align}
		
		Through \eqref{def of e}, we get
		\begin{align*}
			h^{\circ n}(x)-x-4n=\sum_{j=1}^{n}(h^{\circ (j)}(x)-h^{\circ (j-1)}(x)-4)=\sum_{j=1}^{n}\mathfrak{e}\left( h^{\circ {j-1}}(x)\right) .
		\end{align*}
		With the help of Lemma \ref{def of a}, we can compute it like:
		\begin{align}\label{harmonic and h}
			\left| h^{\circ n}(x)-x-4n\right| &= \left| \sum_{j=0}^{n-1}\mathfrak{e}\left( h^{\circ j}(x)\right) \right| \notag \\
			\leq&\sum_{j=0}^{n-1}\left| \frac{32}{(1+s\circ \mathfrak{a} \circ h^{\circ j}(x))(h^{\circ j}(x)+7)}\right|  \notag \\
			&<\sum_{j=0}^{n-1}\frac{32}{\left( 1+\frac{15}{17}\right) (\Re(h^{\circ j}(x))+7)}  \notag \\
			&<\sum_{j=0}^{n-1}\frac{16}{\Re(x)+3j+7} .
		\end{align}
		
		Using the triangular inequality, we separate the \eqref{main estimate} into four. To make the first estimate, we deal with $\mathfrak{e}(x)$ more carefully. Enlightened by the form of \eqref{harmonic and h}, computing the sum between $\mathfrak{e}(x)$ and $\frac{16}{x+7}$:
		\begin{align*}
			\mathfrak{e}(x)+\frac{16}{x+7}=\frac{16\left( s\circ \mathfrak{a}(x)-1\right) }{(s\circ \mathfrak{a}(x)+1)(x+7)}=\frac{-16\cdot 64}{\left( s\circ \mathfrak{a}(x)+1\right)^2(x+7)^3},
		\end{align*}
		we have
		\begin{align*}
			\left| \mathfrak{e}(x)+\frac{16}{x+7}\right| <\frac{1024}{\left| x+7\right| ^3}.
		\end{align*}
		Then we can give the following:
		\begin{align}\label{c1}
			\left| h^{\circ n}(x)-x-4n+\sum_{j=0}^{n-1}\frac{16}{h^{\circ j}(x)+7}\right|&=\left| \sum_{j=0}^{n-1}\left( \mathfrak{e}(h^{\circ j}(x))+\frac{16}{h^{\circ j}(x)+7}\right) \right| \notag \\
			&<\sum_{j=0}^{n-1}\frac{1024}{\left| h^{\circ j}(x)+7\right| ^3} \notag\\
			&<\sum_{j=0}^{n-1}\frac{1024}{(\Re(x)+3j+7)^3}  \notag\\
			&<\frac{C_1}{\Re(x)^2}\text{ (by \eqref{cubic inequality} in Lemma \ref{harmonic lemma})},
		\end{align}
		where $C_1$ is a certain positive constant.
		
		For the second estimate, we compare $\frac{16}{h^{\circ j}(x)+7}$ and $\frac{16}{x+4j+7}$:
		\begin{align}\label{c2}
			\left| \sum_{j=0}^{n-1}\frac{16}{h^{\circ j}(x)+7}-\frac{16}{x+4j+7}\right| \leq &\sum_{j=0}^{n-1}\left| \frac{16\left( x+4j-h^{\circ j}(x)\right) }{\left( h^{\circ j}(x)+7\right)(x+4j+7) }\right|  \notag \\
			&<\sum_{j=1}^{n-1}\frac{32\log (\Re(x)+3j+7)}{3\left| h^{\circ j}(x)+7\right| \left| x+4j+7\right| }\text{ (by \eqref{harmonic and h} and \eqref{harmonic and log})} \notag \\
			&<\sum_{j=1}^{n-1}\frac{10\log (\Re(x)+3j+7)}{(\Re(x)+3j+7)^2}\text{ (by \eqref{real of h})} \notag \\
			&<\frac{C_2\log \Re(x)}{\Re(x)}\text{ (by \eqref{log quadratic inequality})},
		\end{align}
		in here $C_2$ is also some certain positive number.
		
		The third step is to apply \eqref{harmonic} and \eqref{quadratic inequality} in Lemma \ref{harmonic lemma}:
		\begin{align}\label{c3}
			\left| \sum_{j=0}^{n-1}\frac{16}{x+4j+7}-4\log \left( \frac{x+4n+7}{x+7}\right)  \right|<\sum_{j=0}^{n-1}\frac{64}{\left| x+4j+7\right| ^2}<\frac{C_3}{\Re(x)}.
		\end{align}
		Similiarly, $C_3$ is some certain postive number.
		
		About the last step, it is simply as:
		\begin{align}\label{c4}
			\left| 4\log \left( \frac{x+4n+7}{x+7}\right) -4\log \left( \frac{x+4n}{x}\right) \right| &<4\left| \log \left( 1+\frac{7}{x+4n}\right) \right| +4\left| \log \left( 1+\frac{7}{x}\right) \right| <\frac{C_4}{\Re (x)}
		\end{align}
		where $C_4>0$ is a certain number.
		
		Then we can choose our postive constant
		\begin{align*}
			C:=C_1+C_2+C_3+C_4,
		\end{align*}
		and get
		\begin{align*}
			\frac{C_1}{\Re(x)^2}+\frac{C_2 \log \Re(x)}{\Re(x)}+\frac{C_3}{\Re(x)}+\frac{C_4}{\Re (x)}<\frac{(C_1+C_2+C_3+C_4)\log \Re(x)}{\Re(x)}=\frac{C\log \Re(x)}{\Re(x)}
		\end{align*}
		since $\Re(x)>\log \Re(x)>1$ when $\Re (x)>10$.
		
		Combining three estimates \eqref{c1}, \eqref{c2} and \eqref{c3} by triangle inequality, we get the conclusion.
		
		Now for $\Re(h^{\circ n}(x))$ and $\Im(h^{\circ n}(x))$, we can give such bounds separately:
		\begin{align*}
			|\Re(h^{\circ n}(x))-\Re(x)-4n|<\sum_{j=0}^{n-1}\frac{16}{\Re(x)+3j+7}\ and\ |\Im(h^{\circ n}(x))-\Im(x)|<\sum_{j=0}^{n-1}\frac{16}{\Re(x)+3j+7}.
		\end{align*}
		
		From \eqref{harmonic and log}, the harmonic series can be controlled by:
		\begin{align*}
			\sum_{j=0}^{n-1}\frac{16}{\Re(x)+3j+7}<\frac{32}{3}\log(\Re(x)+3n+7)<10\log(\Re(x)+3n+7).
		\end{align*}
		
		For $\Re(h^{\circ n}(x))$ and $\Im(h^{\circ n}(x))$, now we have:
		\begin{align*}
			\Re(h^{\circ n}(x))>\Re(x)+4n-10\log(\Re(x)+3n+7)
		\end{align*}
		and
		\begin{align*}
			\Im(h^{\circ n}(x))<\Im(x)+10\log(\Re(x)+3n+7).
		\end{align*}
		Immediately we can compute the $\Im(\log(h^{\circ n}(x)))$ as:
		\begin{align*}
			0\leq \Im(\log(h^{\circ n}(x)))=\arctan\left( \frac{\Im(h^{\circ n}(x))}{\Re(h^{\circ n}(x))}\right) <\arctan \left( \frac{\Im(x)+10\log(\Re(x)+3n+7)}{\Re(x)+4n-10\log(\Re(x)+3n+7)}\right) 
		\end{align*}
		and for every $x \in \mathbb{H}_0$, the last one goes to $0$ when $n \to \infty$.
	\end{proof}
	
	\section{Universal Distance}
	
	In this section, we demonstrate that there is a positive distance between \(\supp(\mu_{\infty})\) (see \eqref{definition of mu} in Section 4) and \(\mathbb{R}^+\). Using this result, we will establish the analyticity of the pressure function and analyze its asymptotic behavior in the next section.
	
	\begin{theorem}\label{positive distance}
		The set $\supp(\mu_{\infty})$ is the union of $J(f)$, the Julia set of $f$ and all zeros of every polynomial in the sequence $\left\lbrace M_n(x) \right\rbrace _{n=0}^{\infty}$. Moreover,
		\begin{enumerate}
			\item[1.]There is $Q>0$ so that the distance between $\supp(\mu_{\infty})$ and $[\, 0, \infty)$ is bigger than $Q$.
			\item[2.]There is $R>0$ so that $\supp(\mu_{\infty}) $ is contained in the strip $[\, -3, \infty) \times [-R,R]$.
		\end{enumerate}
	\end{theorem}
	This shows a stronger statement than Theorem \ref{positive distance for zeros and real line}.
	
	In Section 6.1, we introduce the Fatou coordinate. Then, in Section 6.2, we demonstrate how to identify gaps between \(\supp(\mu_{\infty})\) and $\mathbb{R}^+$ under the new coordinate system.
	
	\subsection{Fatou Coordinate}
	
	Unlike the classical definition, we introduce our new coordinate in this way:
	\begin{proposition}\label{definition of F}
		Define
		\begin{align*}
			F_n(x)=h^{\circ n}(x)-4n+4\log(x+4n)
		\end{align*}
		for $x \in \mathbb{H}_0$ and $n \in \mathbb{N^+}$, then it converges uniformly to a univalent map $F$ when $n \to \infty$. In the mean time, $F$ also satisfies
		\begin{align}\label{translation of F}
			F\circ h(x)=F(x)+4
		\end{align}and
		\begin{align}\label{estimate of F}
			|F(x)-x-4\log x|\leq \frac{C \log \Re(x)}{\Re(x)}.
		\end{align}
		for all $x \in \mathbb{H}_0$.
		
		Moreover, $F$ maps real numbers to real numbers.
	\end{proposition}
	
	For the injection of $F$, we need the help of the following lemma.
	
	\begin{lemma}\label{injection lemma}
		If $E$ is a holomorphic function defined on the right half plane $\mathbb{H}^+:=\left\lbrace x \in \mathbb{C}|\Re(x)>0\right\rbrace $ and there is a constant $C^{\prime}>0$ so that
		\begin{align}\label{condition of E}
			|E(x)-x-4\log x|\leq \frac{C^{\prime} \log \Re(x)}{\Re(x)}
		\end{align}
		holds for all $x \in \mathbb{H}_0$, then we can find $R>0$ so that for all $x_1, x_2 \in \mathbb{H}^+$ with $\Re(x_1),\Re(x_2)>R$, we have
		\begin{align*}
			E(x_1)\neq E(x_2).
		\end{align*}
	\end{lemma}
	
	\begin{proof}
		We pick any points $x_1, x_2 \in \mathbb{H}^+$ so that $\Re(x_1),\Re(x_2)>R^{\prime}+1$ for some positive number $R^{\prime}>0$ satisfying:
		\begin{align*}
			\frac{C^{\prime}\log R^{\prime}}{R^{\prime}}<\frac{1}{5}\text{ and }\left| 4\log\left( \frac{R^{\prime}+1}{R^{\prime}-1}\right) \right| <\frac{1}{5}.
		\end{align*}
		
		If $|x_1-x_2|>1$, we rewrite $E(x_1)-E(x_2)$ as:
		\begin{align*}
			E(x_1)-E(x_2)=(E(x_1)-x_1-4\log x_1)-(E(x_2)-x_2-4\log x_2)+(x_1-x_2)+4\log \frac{x_1}{x_2}.
		\end{align*}
		By assumption its first and second terms have norms smaller than $\frac{1}{5}$. For the last term, we estimate it:
		\begin{align*}
			4\left| \log \frac{x_1}{x_2}\right| =4\left| \int_{x_1}^{x_2}\frac{1}{s}ds\right| <\frac{2}{5}|x_1-x_2|.
		\end{align*}
		Applying \eqref{condition of E} and the triangular inequality, we get
		\begin{align*}
			\left| E(x_1)-E(x_2)\right| >|x_1-x_2|-\frac{2}{5}|x_1-x_2|-\frac{1}{5}-\frac{1}{5}=\frac{1}{5}>0
		\end{align*}
		so $E(x_1)-E(x_2)$ can't be zero.
		
		Otherwise, if $|x_1-x_2|<1$, through Cauchy integral, for $x$ on the line $(x_1,x_2)$ we get
		\begin{align*}
			E^{\prime}(x)-1=\frac{\mathrm{d} (E(x)-x-4\log x_0)}{\mathrm{d} x}=\frac{1}{2\pi i}\oint _{\partial B(x,1)}\frac{E(z)-z-4\log x_0}{(z-x)^2}\mathrm{d} z.
		\end{align*}
		where $x_0:=\frac{x_1+x_2}{2}$.
		Then
		\begin{align*}
			|E^{\prime}(x)-1|&<\max_{z\in B(x,1)}\left| E(z)-z-4\log x_0\right| \\
			\leq &\max_{z\in B(x,1)}\left| E(z)-z-4\log z\right| + \max_{z\in B(x,1)}\left| 4\log \frac{z}{x_0} \right|\\
			&<\frac{1}{5}+\frac{1}{5}.
		\end{align*}
		That implies
		\begin{align*}
			\Re(F^{\prime}(x))>1-\frac{2}{5}=\frac{3}{5}
		\end{align*}
		for $x$ on $(x_1,x_2)$. From the following integral
		\begin{align*}
			E(x_1)-E(x_2)=\frac{1}{x_2-x_1}\int_{0}^{1}E^{\prime}(x_1+t(x_2-x_1))\mathrm{d} t,
		\end{align*}
		we can see $E(x_1)-E(x_2)$ is the product of two nonzero complex numbers, meaning it is not zero.
		
		By taking $R:=R^{\prime}+1$ we proved our statement.
	\end{proof}
	
	Now we prove the property of $F$.
	
	\begin{proof}[Proof of Proposition \ref{definition of F}]
		For any $x \in \mathbb{H}_0$, $k,j \in \mathbb{N^+}$ and $k>j$, we compute the difference:
		\begin{align*}
			F_k(x)-F_j(x)=h^{\circ k}(x)-h^{\circ j}(x)-4(k-j)+4\log \left( \frac{x+4k}{x+4j}\right) .
		\end{align*}
		Through Theorem \ref{iteration of h}, we can replace $x$ by $h^{j}(x)$ and $n$ by $k-j$ in Formula \ref{main estimate} to get
		\begin{align*}
			\left| h^{\circ k}(x)-h^{\circ j}(x)-4(k-j)+4\log \left( \frac{h^{\circ j}(x)+4(k-j)}{h^{\circ j}(x)}\right) \right| < \frac{C \log \Re(h^{\circ j}(x))}{\Re(h^{\circ j}(x))}.
		\end{align*}
		By triangular inequality,
		\begin{align*}
			|F_k(x)-F_j(x)|&<\frac{C \log \Re(h^{\circ j}(x))}{\Re(h^{\circ j}(x))}+\left| 4\log  \left( \frac{x+4k}{x+4j}\right) -4\log \left( \frac{h^{\circ j}(x)+4(k-j)}{h^{\circ j}(x)}\right)\right|  \\
			&=\frac{C \log \Re(h^{\circ j}(x))}{\Re(h^{\circ j}(x))}+\left| 4\log  \left( \frac{x+4k}{h^{\circ j}(x)+4(k-j)}\right) -4\log \left( \frac{x+4j}{h^{\circ j}(x)}\right)\right|  .
		\end{align*}
		
		From \eqref{main estimate}, the following quotient satisfies
		\begin{align*}
			\left| \frac{x+4j}{h^{\circ j}(x)}-1\right| <\frac{4\left| \log \left( \frac{x+4j}{x}\right) \right|+\frac{C\log \Re(x)}{\Re(x)} }{\left| x+4j-4\log \left( \frac{x+4j}{x}\right)\right| -\frac{C\log \Re(x)}{\Re(x)}},
		\end{align*}
		so it approaches to $1$ uniformly on $\mathbb{H}_0$ when $j \to \infty$.
		
		Similarly, for the other quotient we have
		\begin{align*}
			\left| \frac{x+4k}{h^{\circ j}(x)+4(k-j)}-1\right| <\frac{4\left| \log \left( \frac{x+4j}{x}\right) \right|+\frac{C\log \Re(x)}{\Re(x)} }{\left| x+4k-4\log \left( \frac{x+4j}{x}\right)\right| -\frac{C\log \Re(x)}{\Re(x)} },
		\end{align*}
		which means it approaches to $1$ uniformly on $H_0$ with any $k>j$ when $j \to \infty$.
		
		Then for any small $\epsilon >0$, we can pick $n_1 \in \mathbb{N^+}$ so that for all $k>j>n_1$ and all $x\in \mathbb{H}_0$, the following estimate holds:
		\begin{align*}
			\left| \log  \left( \frac{x+4k}{h^{\circ j}(x)+4(k-j)}\right) -\log \left( \frac{x+4j}{h^{\circ j}(x)}\right) \right| < \frac{\epsilon}{2}.
		\end{align*}
		
		By \eqref{main estimate}, we also know $\Re(h^{\circ j}(x))$ approaches to $\infty$ when $j \to \infty$, indicating there is $n_2 \in \mathbb{N^+}$ so that for all $j>n_2$ and all $x\in H_0$, the expression
		\begin{align*}
			\frac{C \log \Re(h^{\circ j}(x))}{\Re(h^{\circ j}(x))}<\frac{\epsilon}{2}.
		\end{align*}
		Together we have
		\begin{align*}
			|F_k(x)-F_j(x)|<\epsilon
		\end{align*}
		for all $k>j>\max\left\lbrace n_1,n_2\right\rbrace $ and $x \in \mathbb{H}_0$. Then we prove the uniform convergence of $F_n$.
		
		For \eqref{translation of F}, from the equality:
		\begin{align*}
			F_n\circ h(x)&=h^{\circ (n+1)}(x)-4n+4\log (h(x)+4n)\\
			&=h^{\circ (n+1)}(x)-4(n+1)+4\log (x+4n)+4+4\log (\frac{h(x)+4n}{x+4n})\\
			&=F_{n+1}(x)+4+4\log (\frac{h(x)+4n}{x+4n})
		\end{align*}
		if we take $n$ to $\infty$ on both side, the last term goes to $0$ then we proved \eqref{translation of F}.
		
		Notice by taking $n\to \infty$ in \eqref{main estimate}, for all $x\in \mathbb{H}_0$, we have
		\begin{align*}
			|F(x)-x-4\log x|\leq \frac{C \log \Re(x)}{\Re(x)}.
		\end{align*} So \eqref{estimate of F} also holds.
		
		Now we show the injection of $F$. Assume there are $x_1\neq x_2$ so that $F(x_1)=F(x_2)$. Then we can use \eqref{translation of F} to get:
		\begin{align*}
			F(h(x_1))=F(x_1)+4=F(x_2)+4=F(h(x_2)).
		\end{align*}
		Inductively, given $F(x_1)=F(x_2)$, for all $n \in \mathbb{N^+}$ we have
		\begin{align*}
			F(h^{\circ n}(x_1))=F(h^{\circ n}(x_2)).
		\end{align*}
		
		Since $F$ satisfies \eqref{estimate of F}, by Lemma \ref{injection lemma} we know $F(x-10)$ is injective on $\left\lbrace x\in \mathbb{H}^+| \Re(x)>\tilde{R} \right\rbrace $ for some $\tilde{R}>0$. According to \eqref{main estimate} and the injection of $h$, we can substitute $x_1$ and $x_2$ with $h^{\circ n}(x_1)$ and $h^{\circ n}(x_2)$ respectively for some $n\in \mathbb{N^+}$ to get $F$ is injective on $\mathbb{H}_0$.
		
		Since $h$ and $\log$ map real numbers to real numbers, $H$ would share the same property.
	\end{proof}
	
	\subsection{Appropriate Domains near the Critial Point}
	
	\begin{lemma}
		Define $\mathbb{H}_{-3}:=\left\lbrace x \in \mathbb{C}| \Re(x)<-3\right\rbrace $. We state that
		\begin{align*}
			\mathbb{H}_{-3} \subset F(f).
		\end{align*}
	\end{lemma}
	
	\begin{proof}
		We know
		\begin{align*}
			f(x)=\frac{x^2-x+4}{x+3}=x-4+\frac{16}{x+3}.
		\end{align*}
		If $\Re(x)<-3$, then
		\begin{align*}
			\Re(f(x))<\Re(x)-4<-7.
		\end{align*}
		So $f^{\circ n}(x)$ is attracted to $\infty$, implying $\mathbb{H}_{-3} \subset F(f).$
	\end{proof}
	
	\begin{lemma}
		We state that $\supp(\mu_{\infty})=J(f)\cup \mathfrak{P}$, where $\mathfrak{P}$ represents all preimages of $-1$ under $f$ and its iterations.
	\end{lemma}
	
	\begin{proof}
		We prove $\mathfrak{P}^{\prime}$, the derived set of $\mathfrak{P}$ equals $J(f)$.
	\end{proof}
		
	\begin{theorem}(Brolin)
		For $x^2+c$, assume $c \neq 0$, there is a probability measure $\nu$ whose support is the Julia set with the property that for any point $w\in \mathbb{C}$ with at most two exceptions, the preimage sets $f^{-n}(w)$ equidistribute to $\nu$.
	\end{theorem}
	
	\begin{lemma}
		Define $L_a:=\left\lbrace x \in \mathbb{C}|\Re(x)=a\right\rbrace $ for $a \in \mathbb{R}$. We have $\supp(\mu_{\infty})\cap \left\lbrace x\in \mathbb{C}|11<\Re(x)<\Re(f(L_{11}))\right\rbrace $ is bounded.
	\end{lemma}
	
	\begin{lemma}
		Define $l$ to be the function so that $F\circ h=l\circ F$ on $\mathbb{H}_0$. Then $l$ maps any vertical lines $L_a  \subset \mathbb{H}_0$ to $L_{a+4}$.
	\end{lemma}
	
	\begin{definition}
		For every $n \in \mathbb{N}^+$, define $\mathfrak{P}_n:=\supp(\mu_{\infty})\cap \left\lbrace x \in \mathbb{C}|\Re(x)>f^{n-1}(L_{11})\right\rbrace $.
		
		For $n=0$ define $\mathfrak{P}_0:=\supp(\mu_{\infty})\backslash \cup_{n=1}^{\infty}\mathfrak{P}_n$.
	\end{definition}
		
	\begin{lemma}\label{location of Cn}
		There is $N_1 \in \mathbb{N^+}$ so that $\mathfrak{P}_{N_1} \in \mathbb{H}_0$.
	\end{lemma}
	
	We initiate our analysis by studying the toy model $$\chi(w):=w^2.$$ There is a conformal isomorphism $\eta$ which maps the attracting basin of $g$ at $\infty$ to the one of $g$. Since $g$ is a two-to-one map, it has only two Fatou component. From Carleson and Gamelin's book [3] we know its Julia set is a Jordan curve. By Carathéodory's theorem we can extend $\eta$ on the boundaries of their basins as a homeomorphism.
	
	For this simplest case, by Theorem \ref{zeta converge} we need to focus on preimages of $\eta (-2) $ under $g$. We know $\eta(-2)$ locates outside of the filled Julia set(closed unit disk) of $g$, so the same for all its preimages under $g$. In that sense we can employ the counting result of preimages under $g$ to $g$.
	
	In the $w$-coordinate, g maps $(\, -\infty,-1]$ onto $[\, 1,\infty)$ and the second interval is invariant under $g$. In the $x$-coordinate, we find $(\, -\infty,-1]$ and $[\, 0,\infty)$ have same properties under $g$. So
	\begin{align}\label{property of eta}
		\eta((\, -\infty,-1])=(\, -\infty,-1],\ \eta([\, 1,\infty))=[\, 0,\infty).
	\end{align}
	Then we have $\eta(-2)$ locates on $(-\infty,-1)$. After $n$ steps, its preimages have the formula $\left| \eta(-2)\right| ^{\frac{1}{2^n}}e^{\frac{i\pi k}{2^n}}$, where $k=\pm 1,\pm 3,\pm 5,……,\pm \left( 2^n-1\right) $ and $\left| \eta(-2)\right|$ is a positive number greater than $1$.
	
	For the toy model, it is convenient to choose annulus sectors like
	\begin{align*}
		\mathfrak{B}_n:=\exp\left( \left[ 0,|\log \eta(-2)|^{\frac{1}{2^n}}\right] \times \left[ -\frac{i \pi}{2^n},\frac{i \pi}{2^n}\right] \right) 
	\end{align*}for every $n\in \mathbb{N^+}$ and count numbers of preimages in those domains near $w=0$.
	
	It is clear that every $\mathfrak{B}_n$ contains exactly $2$ preimages of stage $n$. For stage $n+1$, no wander $\mathfrak{B}_n$ contains 4 preimages of it. Generally we can say that for stage $n+k$, $\mathfrak{B}_n$ contains $2^{k+1}$ preimages of that stage for $k\in \mathbb{N}^+$. In the same time, the domain $\mathfrak{B}_n\backslash \mathfrak{B}_{n+1}$ contains $2$ preimages of stage $n$, $2$ of stage $n+1$,$\dots$, $2^{k+1}-2^k$ of stage $n+k$.
	
	The other reason we pick such shape of $\mathfrak{B}_n$ is that for every $n \in \mathbb{N^+}$ we have:
	\begin{align}\label{chi act on Bn}
		\chi(\mathfrak{B}_{n+1})=\mathfrak{B}_n,\ \chi(\mathfrak{B}_{n+2}\backslash \mathfrak{B}_{n+1})=\mathfrak{B}_{n+1}\backslash \mathfrak{B}_n.
	\end{align}
	
	Since $\mathfrak{B}_n$ locate near $w=1$, by the definition of $s$ we get
	\begin{align}\label{s act on Bn}
		s(\mathfrak{B}_n)=\mathfrak{B}_{n+1},\ s(\mathfrak{B}_{n+1}\backslash \mathfrak{B}_n)=\mathfrak{B}_{n+2} \backslash \mathfrak{B}_{n+1}
	\end{align}
	and $m$ is a bijection from $\mathfrak{B}_n$ to $\mathfrak{B}_{n+1}$ for every $n \in \mathbb{N^+}$.
	
	When $w$ goes to $1$ from the exterior of unit circle, $z$ goes to $0$ from the exterior of the cauliflower and then $x$ would go to infinity by $x=\phi^{-1}(z)=\frac{z+4}{z}$.
	
	Recall from Section 3 that
	\begin{align*}
		f(x)=\frac{x^2-x+4}{x+3}.
	\end{align*}
	By the commutative diagram
	\begin{align}\label{commutative diagram}
		\phi^{-1}\circ \eta^{-1}\circ \chi=f \circ \phi^{-1}\circ \eta^{-1},
	\end{align}
	for every $n \in \mathbb{N^+}$ if we definte domains as
	\begin{align*}
		\mathfrak{C}_n:=\phi^{-1}\circ \eta^{-1}\left( \mathfrak{B}_n \right)
	\end{align*}we would know they locate in the attracting basin of $\infty$ and
	\begin{align*}
		f(\mathfrak{C}_{n+1})=\mathfrak{C}_n \text{ and } f(\mathfrak{C}_{n+2}\backslash \mathfrak{C}_{n+1})=\mathfrak{C}_{n+1}\backslash \mathfrak{C}_n.
	\end{align*}
	
	Next we show $\mathfrak{C}_1 \subset \mathbb{C}\backslash [-15,1]$, which by definition implies $\mathfrak{C}_n \subset \mathbb{C}\backslash [-15,1]$ for all $n \in \mathbb{N^+}$. By \eqref{property of eta}, we know $(-\infty,-1)\cap \eta^{-1}(\mathfrak{B}_1)=\emptyset$. Since $[-1,-\frac{1}{4}]$ is contained in the filled Julia set of $g$, we also have $(\,-\infty,-\frac{1}{4}]\cap \eta^{-1}(\mathfrak{B}_1)=\emptyset$. Notice $\phi^{-1}((\,-\infty,-\frac{1}{4}])=[\,-15,1)$ and $\phi^{-1}(\infty)=1$, due to the bijection of $\phi$ on the whole Riemann sphere, it is straightforward that $\mathfrak{C}_1=\phi^{-1}\circ \eta^{-1}\left( \mathfrak{B}_1 \right)$ has no common elements with $[-15,1]$.
	
	Since $h$ sends $x$ to $\infty$ for $x$ with large norm, $\eta \circ \phi \circ h(x)$ would locate near $w=1$. In the commutative diagram $\phi^{-1}\circ \eta^{-1}\circ \chi=f\circ \phi^{-1}\circ \eta^{-1}$, when we take inverse on both sides and choose $h$ as the branch of $f^{-1}$, the corresponding one for $\chi^{-1}$ would be $s$ so we have
	\begin{align*}
		s\circ \eta \circ \phi =\phi \circ \eta \circ h.
	\end{align*}
	That means $m$ and $h$ are also conjugated, then
	\begin{align}\label{h on Cn}
		h(\mathfrak{C}_n)=\mathfrak{C}_{n+1},\ h(\mathfrak{C}_{n+1}\backslash \mathfrak{C}_n)=\mathfrak{C}_{n+2}\backslash \mathfrak{C}_{n+1}
	\end{align}
	hold for every $n \in \mathbb{N^+}$.
	
	Further, we can also prove that $\mathfrak{C}_n$ is contained in $\mathbb{H}_0$ when $n$ is large.
	
	\begin{proof}
		Given that $m(\mathfrak{B}_n)=\mathfrak{B}_{n+1}$, and $\mathfrak{B}_n$ is approaching $1$ as $n \to \infty$, we get $\eta^{-1}(\mathfrak{B}_n)$ is trending toward $0$ and $\mathfrak{C}_n$ toward infinity. To be more specific, first we can pick $N_1\in \mathbb{N^+}$ so that all $z \in \eta^{-1}(\mathfrak{B}_{N_1})$ satisfy $|z|<\frac{1}{5}$. From classic result, we also know if $z \in \eta^{-1}(\mathfrak{B}_{N_1})$, then $\Re (z)>|\Im (z)|$. Through the following computation
		\begin{align*}
			\phi^{-1}(z)=\frac{4}{z}+1=(\Re (z)-\Im (z)i)\frac{4}{|z|^2}
		\end{align*}
		we get $\Re(\phi^{-1}(z))>\frac{4\Re (z)}{2(\Re (z))^2}>10$ for all $z \in \eta^{-1}(\mathfrak{B}_{N_1})$. By definition $\mathfrak{C}_{N_1}=\phi^{-1}\circ \eta^{-1}(\mathfrak{B}_{N_1})$ so $\mathfrak{C}_{N_1} \in \mathbb{H}_0$.
	\end{proof}
	
	Define
	\begin{align*}
		\mathfrak{R}_n:=\left( \mathfrak{C}_n\backslash\mathfrak{C}_{n+1}\right) \cap \supp(\mu_{\infty}),
	\end{align*}
	for every $n \in \mathbb{N^+}$ and
	\begin{align*}
		\mathfrak{R}_0:=\supp(\mu_{\infty})\backslash \cup_{n=1}^{\infty}\mathfrak{R}_n.
	\end{align*}
	We have every $\mathfrak{R}_n$ is bounded for $n \in \mathbb{N}$ and
	\begin{align}\label{partition of mu}
		\supp(\mu_{\infty})=\cup_{n=0}^{\infty}\mathfrak{R}_n.
	\end{align}
	
	Now we can prove the main theorem.
	\begin{proof}[Proof of Theorem \ref{positive distance}]
		By definition $\supp(\mu_{\infty})$ is closed so $\overline{\mathfrak{R}_n} \subset \supp(\mu_{\infty})$. For each element $k$ in $\overline{\mathfrak{R}_n}$ with $n \in \mathbb{N}$, either it is a preimage of the point
		\begin{align*}
			\phi ^{-1}(-2)=-1
		\end{align*}under iterations of $f$, or belongs to $J(f)$, the Julia set of $f$.
		
		For the first case, notice
		\begin{align*}
			f([\, 0, \infty))=\left[ \, \frac{4}{3}, \infty\right) 
		\end{align*}so $k$ does not locate on $[\, 0, \infty)$.
		
		For the second case, we know $(-\infty,-1)\cup (0,\infty)$ is contained in $F(g)$, the Fatou set of $g$. Notice that
		\begin{align*}
			\phi ^{-1}((\, -\infty,-4] \cup (0,\infty) \cup \left\lbrace \infty \right\rbrace )=[\, 0,\infty)
		\end{align*}
		so $[\, 0,\infty)$ is also contained in $F(f)$, the Fatou set of $f$.
		
		In summary we can state
		\begin{align}\label{empty intersection}
			\supp (\mu_{\infty})\cap [\, 0,\infty)=\varnothing.
		\end{align}
		
		Next we show there is a positive gap. After changing coordinate under $F$, we can see that
		\begin{align*}
			F(\mathfrak{C}_{n+1})=F\circ h(\mathfrak{C}_n)=F(\mathfrak{C}_n)+4.
		\end{align*}
		Define
		\begin{align*}
			\mathfrak{D}_n=F(\mathfrak{C}_n)
		\end{align*}
		for every $n\in \mathbb{N^+}$, we would know all $\mathfrak{D}_n$ have the same shape and
		\begin{align*}
			\mathfrak{D}_n+4=\mathfrak{D}_{n+1}.
		\end{align*}
		
		By the definition of $\mathfrak{R}_n$, we also have
		\begin{align*}
			h(\mathfrak{R}_n)=\mathfrak{R}_{n+1}
		\end{align*}
		for every $n\in \mathbb{N^+}$. Similarly,
		\begin{align*}
			F(\mathfrak{R}_{n+1})=F(\mathfrak{R}_n)+4.
		\end{align*}
		
		By \eqref{estimate of F}, the map $F$ is close to the identity map when the variable has large real parts. Since $F$ is also a real function on $\mathbb{R}$, we get $F([\, 11,\infty))=[\, F(11),\infty)$. Because $\mathfrak{R}_{N_1}$ is bounded, we know $\overline{\mathfrak{R}_{N_1}}$ and $H(\overline{\mathfrak{R}_{N_1}})$ are compact. By \eqref{empty intersection} and compactness, there is $Q^{\prime}>0$ so that $dist(F(\overline{\mathfrak{R}_{N_1}}),[\, F(11),\infty))>Q^{\prime}$. Since all $F(\overline{\mathfrak{R}_n})$ have the same shape, we get
		\begin{align*}
			dist(F(\overline{\mathfrak{R}_n}),[\, F(11),\infty))>Q^{\prime}
		\end{align*}
		for all $n>N_1$.
		
		Again by \eqref{estimate of F}, $\Im(F(x)-x)$ can be estimated as:
		\begin{align*}
			\Im (F(x)-x)\leq \Im (4\log x)+\frac{C\log\Re(x)}{\Re(x)}.
		\end{align*}
		Due to \eqref{main estimate} in Theorem \ref{iteration of h}, we can take big enough $N_2 >N_1$ so that all $x \in \mathfrak{C}_n$ with $n \geq N_2$ satisfy:
		\begin{align*}
			\Im (4\log x)+\frac{C\log\Re(x)}{\Re(x)}<\frac{Q^{\prime}}{2}.
		\end{align*}
		Then for all $x \in \mathfrak{R}_n$ with $n > N_2$, we have
		\begin{align*}
			\Im(x) > Q^{\prime}-\frac{Q^{\prime}}{2}=\frac{Q^{\prime}}{2},
		\end{align*}
		which is equivalent to say for all $n > N_2$,
		\begin{align*}
			dist(\overline{\mathfrak{R}_n},[\, 11,\infty))>\frac{Q^{\prime}}{2}.
		\end{align*}
		Following the same way as in Lemma \ref{location of Cn}, we can find $N_3 > N_2$ so that every point $s$ in $\mathfrak{R}_{N_3}$ satisfy $\Re (s)>11$. Then it is clear that for all $n > N_3$, we have
		\begin{align*}
			dist(\overline{\mathfrak{R}_n},[\, 0,\infty))>\frac{Q^{\prime}}{2}.
		\end{align*}
		
		For $0\leq n\leq N_3$, by compactness there is $Q^{\prime \prime}>0$ so that
		\begin{align*}
			dist(\bigcup_{n=1}^{N_3}\overline{\mathfrak{R}_n},[\, 0,\infty))>Q^{\prime \prime}
		\end{align*}
		Define $Q=\min \left\lbrace \frac{Q^{\prime}}{2}, Q^{\prime \prime}\right\rbrace $, then
		\begin{align*}
			dist(\supp (\mu_{\infty}),[\, 0,\infty))>Q.
		\end{align*}
		
		To prove item 2, notice that
		\begin{align*}
			F(\mathfrak{C}_n)=F(h^{\circ (n-1)}(\mathfrak{C}_1))=F(\mathfrak{C}_1)+4(n-1).
		\end{align*}
		Again, according to \eqref{estimate of F}, for every $x\in \mathfrak{C}_n$ with $n>N_1$ we get
		\begin{align*}
			\Im(x) \leq \Im(F(x))-\Im(4\log x)+\frac{C\log\Re(x)}{\Re(x)}.
		\end{align*}
		Recall the definition of $\mathfrak{D}_n$, for all $n \geq N_2$, we have
		\begin{align*}
			\max_{x\in \mathfrak{C}_n}\Im(x) \leq \max_{x\in \mathfrak{D}_n}\Im(x)+\frac{Q^{\prime}}{2}
		\end{align*}
		With the fact $\mathfrak{D}_n+4=\mathfrak{D}_{n+1}$ and the compactness argument for $0\leq n\leq N_2$, we proved item 2.
	\end{proof}
	
	\section{The Pressure Function}
	
	For the last section, we compute the pressure function $p$.
	
	First, we introduce
	\begin{align*}
		m:=\lim_{n \to \infty}\frac{1}{3^n}\log|M_n|,
	\end{align*}the pressure-like function corresponding to $M_n$. With Corollary \ref{mu converge}, we have
	\begin{align*}
		m=\int_{\mathbb{C}}\log|y-t|d\mu_{\infty}(t).
	\end{align*}Using the partition of $\supp (\mu_{\infty})$ in \eqref{partition of mu}, we compute
	\begin{align*}
		m(x)=\sum_{n=0}^{\infty}\int_{\mathfrak{R}_n}\log|x-t|d\mu_{\infty}(t).
	\end{align*}
	Then
	\begin{align*}
		|m(x)-\log x|=\sum_{n=0}^{\infty}\int_{\mathfrak{R}_n}\log\left| 1-\frac{t}{x}\right| d\mu_{\infty}(t).
	\end{align*}
	
	Before further computation, we state some properties of $\mathfrak{R}_n$. Since $-1 \in \mathfrak{R}_0$ with weight $\frac{1}{3}$, we have
	\begin{align*}
		\mu_{\infty}(\mathfrak{R}_0)>\frac{1}{3}.
	\end{align*}
	Then
	\begin{align*}
		\mu_{\infty}(\cup_{n=1}^{\infty}\mathfrak{R}_n)<\frac{2}{3}
	\end{align*}
	In the mean time, from the fact that $h(\mathfrak{R}_n)=\mathfrak{R}_{n+1}$, we get
	\begin{align*}
		\mu_{\infty}(\mathfrak{R}_n)=3\mu_{\infty}(\mathfrak{R}_{n+1}).
	\end{align*}
	So the estimate
	\begin{align*}
		\mu_{\infty}(\mathfrak{R}_n)<\frac{4}{3^{n+2}}
	\end{align*} holds for every $n \in \mathbb{N^+}$.
	
	For any small $0<\epsilon<1$, we pick $N^{\prime}>0$ so that
	\begin{align*}
		\frac{3N^{\prime}}{\epsilon}>10 \text{ and } \frac{4}{3^{\frac{3N^{\prime}}{5\epsilon}+2}}<\frac{\epsilon}{6\log |Q|}
	\end{align*} hold. Then for any $x>\frac{3N^{\prime}}{\epsilon^2}$, we seperate $\supp(\mu_{\infty})$ into four parts:
	\begin{align*}
		\supp(\mu_{\infty})_1&:=\supp(\mu_{\infty})\cap \left\lbrace t\in \mathbb{C}|\Re(t)<x\cdot \epsilon \right\rbrace \\
		\supp(\mu_{\infty})_2&:=\supp(\mu_{\infty})\cap \left\lbrace t\in \mathbb{C}|x\cdot \epsilon\leq\Re(t)<x-1 \right\rbrace \\
		\supp(\mu_{\infty})_3&:=\supp(\mu_{\infty})\cap \left\lbrace t\in \mathbb{C}|x-1\leq \Re(t)<x+1\right\rbrace \\
		\supp(\mu_{\infty})_4&:=\supp(\mu_{\infty})\cap \left\lbrace t\in \mathbb{C}|x+1\leq \Re(t)\cdot \epsilon \right\rbrace 
	\end{align*}
	For the first set, we estimate the integral
	\begin{align*}
		\int_{\supp(\mu_{\infty})_1}\log\left| 1-\frac{t}{x}\right| d\mu_{\infty}(t)<\log \left| 1-\frac{\epsilon}{3}\right| <\frac{\epsilon}{3}.
	\end{align*}
	
	For $\mathfrak{R}_n$ contained in $\supp(\mu_{\infty})_2$ and $\supp(\mu_{\infty})_4$, we know
	\begin{align*}
		\frac{\log |x-t-4|-\log|x|}{\log |x-t|-\log |x|}<\frac{t+4}{t}<\frac{3}{2}
	\end{align*}So for $\mathfrak{R}_n$ contained in $\supp(\mu_{\infty})_2$ and $\supp(\mu_{\infty})_4$
	\begin{align*}
		\int_{\mathfrak{R}_n}\log\left| 1-\frac{t}{x}\right| d\mu_{\infty}(t)<\frac{1}{2}\int_{\mathfrak{R}_{n+1}}\log\left| 1-\frac{t}{x}\right| d\mu_{\infty}(t).
	\end{align*} For such $\mathfrak{R}_n$, the index $n$ must be bigger than $\frac{3N^{\prime}}{5\epsilon}$ so for all such $\mathfrak{R}_n$, their weight is controled by $\frac{4}{3^{\frac{3N^{\prime}}{5\epsilon}+2}}$. The integral on those two sets can be controled by
	\begin{align*}
		\int_{\supp(\mu_{\infty})_2}\log\left| 1-\frac{t}{x}\right| d\mu_{\infty}(t)+\int_{\supp(\mu_{\infty})_4}\log\left| 1-\frac{t}{x}\right| d\mu_{\infty}(t)<2\log |Q|\frac{4}{3^{\frac{3N^{\prime}}{5\epsilon}+2}}<\frac{\epsilon}{3}.
	\end{align*}
	
	For $\mathfrak{R}_n$ contained in $\supp(\mu_{\infty})_3$, the number of such $\mathfrak{R}_n$ is no greater than $1$. The weight must be smaller than $\frac{4}{3^{\frac{3N^{\prime}}{5\epsilon}+2}}$. Then the integral on the third set can be estimated like
	\begin{align*}
		\int_{\supp(\mu_{\infty})_3}\log\left| 1-\frac{t}{x}\right| d\mu_{\infty}(t)<\frac{4}{3^{\frac{3N^{\prime}}{5\epsilon}+2}}\log |Q|<\frac{\epsilon}{3}.
	\end{align*}
	
	Combining all estimates we get
	\begin{align*}
		|m(x)-\log x|<\epsilon.
	\end{align*}
	
	Then we are prepared to control the pressure function $p$.
	
	\begin{proof}[Proof of Theorem \ref{pressure function and log}]
		By definition of $p$ and \eqref{Zn and Mn}, for $y \in \mathbb{R}^+$ we get
		\begin{align*}
			p(y)&=\lim_{n \to \infty}\frac{1}{4\cdot 3^n}\log|Z_n(y)|\\
			&=\lim_{n \to \infty}\frac{1}{4\cdot 3^n}\log\left| \frac{2M_n(y^4)}{y^{3^n}}\right| \\
			&=\lim_{n \to \infty}\frac{1}{4\cdot 3^n}\log|2M_n(y^4)|-\frac{1}{4}\log |y|\\
			&=m(y)-\frac{1}{4}\log |y|.
		\end{align*}
		From the previous discussion we know $m(y)$ approaches to $\log y$ when $y$ goes to $\infty$, so
		\begin{align*}
			\lim_{y \to \infty}\left| p(y)-\frac{3}{4}\log y\right| =0
		\end{align*}
	\end{proof}

\end{document}